\pgfplotsset{compat=1.15}
\theoremstyle{plain}
\newtheorem{thm}[subsection]{Theorem}
\newtheorem{lem}[subsection]{Lemma}
\newtheorem{prop}[subsection]{Proposition}
\newtheorem{cor}[subsection]{Corollary}
\theoremstyle{definition}
\newtheorem{rk}[subsection]{Remark}
\newtheorem{definition}[subsection]{Definition}
\newtheorem{ex}[subsection]{Example}
\newtheorem{conj}[subsection]{Conjecture}
\numberwithin{equation}{section}
\newcommand{\A}{{\mathcal A}}
\newcommand{\CC}{{\mathcal C}}
\newcommand{\al}{{\alpha}}
\newcommand{\be}{{\beta}}
\newcommand{\C}{\mathbb{C}}
\newcommand{\PP}{\mathbb{P}}
\begin{document}
\title [On conic-line arrangements in the plane]
{On conic-line arrangements with nodes, tacnodes, and ordinary triple points}

\author[Alexandru Dimca]{Alexandru Dimca}
\address{Universit\'e C\^ ote d'Azur, CNRS, LJAD, France and Simion Stoilow Institute of Mathematics,
P.O. Box 1-764, RO-014700 Bucharest, Romania}
\email{dimca@unice.fr}

\author[Piotr Pokora]{Piotr Pokora}
\address{Department of Mathematics,
Pedagogical University of Krakow,
Podchor\c a\.zych 2,
PL-30-084 Krak\'ow, Poland}
\email{piotr.pokora@up.krakow.pl}

\subjclass[2010]{Primary 14N20; Secondary  14C20, 32S22}

\keywords{conic-line arrangements, nodes, tacnodes, freeness, nearly freeness}

\begin{abstract} In the present paper, we study conic-line arrangements having nodes, tacnodes, and ordinary triple points as singularities. We provide combinatorial constraints on such arrangements and we give the complete classification of free arrangements in this class. 
\end{abstract}
 
\maketitle

\section{Introduction}
In the present paper we study  a class of conic-line arrangements in the complex projective plane $\mathbb{P}^{2}_{\mathbb{C}}$, with special attention to the free arrangements in this class. The theory of free line arrangements is rather rich and we have many results which provide (at least) a partial characterization of the freeness. In that subject, the ultimate goal is to understand whether Terao's Conjecture is true in its whole generality. On the other hand, line arrangements show up naturally in algebraic geometry. For example, Hirzebruch's inequality appreciated very much in combinatorics is motivated by many extreme problems in algebraic geometry and it is obtained with its methods. Based on that, it seems to be quite natural to extend this set-up to higher degree curves. From our perspective it seems very natural to start working on arrangement consisting of rational curves in the plane. Here we study arrangements of smooth conics and lines in the plane. The first main motivation is that conic-line arrangements admit non-ordinary singularities, so we can study arrangements having, for instance, tacnodes as singularities. 

In general, singularities of such arrangements are in general not quasi-homogeneous which makes their study quite complicated.
By \cite[Example 4.1]{SchenckToh}, we know that Terao's conjecture does not hold in general for such arrangements. Let us recall the following counterexample from the aforementioned paper.
\begin{ex}
Consider the following conic-line arrangement
$$\mathcal{CL}_{1} \, : \, xy\cdot(y^{2} + xz)\cdot(y^{2}+x^{2} + 2xz)=0.$$
The intersection point $P = (0:0:1)$ has multiplicity $4$ and it is quasi-homogeneous (although it is not \textbf{ordinary}). One can show that $\mathcal{CL}_{1}$ is free with the exponents $(2,3)$. If we perturb a bit line $y=0$, taking for instance $x-13y = 0$, we obtain a new conic-line arrangement 
$$\mathcal{CL}_{2} \, : \, x\cdot(x-13y)\cdot(y^{2} + xz)\cdot(y^{2}+x^{2} + 2xz)=0.$$
In this new arrangement, the intersection point $P=(0:0:1)$ has multiplicity $4$, but it is not longer quasi-homogeneous, and $\mathcal{CL}_{2}$ is not free. In fact, the arrangement $\mathcal{CL}_{2}$ is nearly free, as defined in \cite{DimcaSticlaru}. Note that in many papers on arrangements of plane curves the hypothesis that all the singularities are quasi-homogeneous plays a key role, see for instance 
\cite{DimcaSernesi} and \cite{STY}.
\end{ex}
In the present paper we focus on conic-line arrangements in the plane such that their singularities are \textbf{nodes, tacnodes, and ordinary triple points}. This assumption is mostly related with our main scope, to verify Terao's Conjecture for as large as possible class of conic-line arrangements.
 One of our results, Proposition \ref{prop:bound9}, tells us that if $C$ is a \textbf{free} reduced plane curve of degree $m$ having only nodes, tacnodes, and ordinary triple points, then $m\leq 9$. Based on that combinatorial restriction, we can perform a detailed search in order to find conic-line arrangements with nodes, tacnodes, and ordinary triple points that are free. Our main result, Corollary \ref{cor5}, tells us that the so-called \textbf{Numerical Terao's Conjecture holds for our class of conic-line arrangements}. As it was mentioned at the beginning of this section, curve arrangements attract researchers working both in algebraic geometry and combinatorics. Due to these reasons, we provide combinatorial constraints on the weak combinatorics of conic-line arrangements with nodes, tacnodes, and ordinary triple points. We deliver a Hirzebruch-type inequality for such arrangements, see Theorem \ref{hirz}, and this theorem is in the spirit of results presented in \cite{PokSz}. Then, using the properties of spectra of singularities as in the seminal paper by Varchenko \cite{V}, we provide bounds on the number of tacnodes and ordinary triple points, see Theorem \ref{prop2}.

\section{Hirzebruch-type inequality for conic-line arrangements}
We start with presenting our set-up. Let $\mathcal{CL} = \{\ell_{1}, ..., \ell_{d}, C_{1}, ..., C_{k} \} \subset \mathbb{P}^{2}_{\mathbb{C}}$ be an arrangement consisting of $d$ lines and $k$ smooth conics. We assume that our conic-line arrangements have $n_{2}$ nodes, $t$ tacnodes, and $n_{3}$ ordinary triple points. We have the following combinatorial count
\begin{equation}
\label{comb:naiv}
4 \binom{k}{2} + 2kd + \binom{d}{2} = n_{2} + 2t + 3n_{3}.
\end{equation}
\begin{proof}
Observe that the left-hand side is the number of pairwise intersections of curves contained in $\mathcal{CL}$. The right-hand side, according to B\'ezout's theorem, is based on the intersection indices. If $p$ is a node, then the intersection index of curves meeting at that point is equal to $1$. If $p$ is a tacnode, then the intersection index of curves meeting at that point is equal to $2$. Finally, if $p$ is an ordinary triple points, that the intersection index of curves meeting at that point is equal to $3$. This completes our justification.
\end{proof}
The first result of the present paper is the following Hirzebruch-type inequality.
\begin{thm}
\label{hirz}
Let $\mathcal{CL} = \{\ell_{1}, ..., \ell_{d}, C_{1}, ..., C_{k}\} \subset\mathbb{P}^{2}_{\mathbb{C}}$ be an arrangement of $d$ lines and $k$ smooth conics and such that $2k+d \geq 12$. Assume that $\mathcal{CL}$ has only $n_{2}$ nodes, $t$ tacnodes, and $n_{3}$ ordinary triple points. Then
\begin{equation}
    20k +n_{2} + \frac{3}{4}n_{3} \geq d + 4t.
\end{equation}
\end{thm}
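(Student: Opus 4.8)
The plan is to pass to a suitable finite abelian cover of $\mathbb{P}^2$ branched along the arrangement $\mathcal{CL}$, apply the Bogomolov–Miyaoka–Yau inequality (or Hirzebruch's logarithmic version) to a smooth model, and then translate the resulting Chern-number estimate back into combinatorial data using the standard count \eqref{comb:naiv}. Concretely, I would take the Kummer-type cover of $\mathbb{P}^2$ of exponent $n$ totally branched along the $d+k$ components, resolve the singularities of the arrangement first so that the pullback becomes a normal crossing (or mild) configuration, and control the contribution of each singularity type — node, tacnode, ordinary triple point — to $c_1^2$ and $c_2$ of the resolved cover. Since a smooth conic contributes genus $0$ and self-intersection $4$ while a line contributes self-intersection $1$, the conics and lines enter the log-Chern numbers asymmetrically, which is exactly the source of the $20k$ versus $d$ discrepancy in the claimed inequality.

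The key steps, in order, would be: (i) write down the orbifold/log Chern numbers $\overline{c}_1^2$ and $\overline{c}_2$ of the pair $(\mathbb{P}^2, \mathcal{CL})$ in terms of $d$, $k$, $n_2$, $t$, $n_3$, using that a tacnode and an ordinary triple point each require a short local resolution tree whose numerical contribution I would compute once and for all; (ii) invoke the inequality $\overline{c}_1^2 \le 3\,\overline{c}_2$ valid under a suitable negativity/hyperbolicity hypothesis on the log cotangent bundle — this is where the assumption $2k+d\ge 12$ enters, guaranteeing that the log canonical divisor is big enough for the estimate to apply; (iii) substitute the combinatorial identity \eqref{comb:naiv} to eliminate $n_2+2t+3n_3$ in favor of $4\binom{k}{2}+2kd+\binom{d}{2}$, and simplify. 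Collecting the coefficients of $k$, $d$, $t$, $n_3$ after this substitution should yield precisely $20k + n_2 + \tfrac34 n_3 \ge d + 4t$.

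The main obstacle I anticipate is step (i): getting the exact local numerical contribution of the tacnode correct. A tacnode $A_3$ is not an ordinary singularity, so resolving it introduces a chain of infinitely near points, and one must carefully bookkeep how many blow-ups are needed, what the resulting self-intersections of the strict transforms are, and how the branch divisor of the cover changes at each stage; an off-by-a-constant error here would corrupt the coefficient $4t$. A secondary subtlety is justifying that the Hirzebruch/BMY-type inequality applies in the logarithmic (rather than the smooth compact) setting with the mild non-normal-crossing singularities we allow after partial resolution — but this is by now standard, and the hypothesis $2k+d\ge 12$ is presumably calibrated exactly so that the relevant log-general-type condition holds. Once the tacnode contribution is pinned down, the remaining algebra is a routine linear substitution using \eqref{comb:naiv}.
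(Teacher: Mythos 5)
Your proposal is a strategy outline rather than a proof: none of the essential computations are carried out. The entire argument rests on the assertion that ``collecting the coefficients \dots should yield precisely $20k + n_2 + \tfrac34 n_3 \ge d + 4t$,'' but you never compute the local contribution of a node, a tacnode, or a triple point to the Chern numbers of your cover, and you yourself flag the tacnode contribution as the step most likely to go wrong. This is not a fixable presentational issue; it is the mathematical content of the theorem. Moreover, the classical Kummer-cover approach of Hirzebruch is calibrated for \emph{line} arrangements with \emph{ordinary} singularities: the abelian cover branched along the components has quotient singularities over ordinary multiple points, which can be resolved and bookkept explicitly, but over a tacnode the local structure of the cover is genuinely different and there is no off-the-shelf resolution recipe. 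This is precisely why the paper avoids covers altogether and instead applies Langer's orbifold version of the Miyaoka--Yau inequality, which packages the local data of each singularity into its local orbifold Euler number $e_{orb}(p,\mathbb{P}^2,\alpha C)$ (namely $(1-\alpha)^2$ for a node, $1-2\alpha$ for a tacnode, and at most $(1-\tfrac{3\alpha}{2})^2$ for a triple point). The paper then takes $\alpha=\tfrac14$; the hypothesis $2k+d\ge 12$ enters only to guarantee that the interval $[3/(2k+d),\,\tfrac14]$ of admissible $\alpha$ making $K_{\mathbb{P}^2}+\alpha C$ effective and log canonical is nonempty, not as a log-general-type or hyperbolicity condition as you suggest.

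A further concern is that even if you pushed the cover construction through, there is no reason to expect your method to reproduce \emph{this} inequality. The specific coefficients $20k$, $\tfrac34 n_3$, and $4t$ are artifacts of the choice $\alpha=\tfrac14$ in Langer's inequality combined with the exact orbifold Euler numbers above; a Kummer cover of exponent $n\to\infty$ would produce a Hirzebruch-type inequality with different weights on each singularity type. So the gap is twofold: the decisive local computations are missing, and the proposed route, even if completed, would likely prove a different (and not obviously comparable) inequality rather than the stated one.
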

We will prove the above theorem using Langer's variation on the Miyaoka-Yau inequality \cite{Langer} which involves the local orbifold Euler numbers $e_{orb}$ of singular points. We recall basics on them in a concise way. Let $(\mathbb{P}^{2}_{\mathbb{C}}, \alpha C)$ be an effective and log-canonical pair, where $C$ is a boundary divisor having only nodes, tacnodes, and ordinary triple points as singularities. Then
\begin{itemize}
    \item if $q$ is a node, then  the local orbifold Euler number is equal to $e_{orb}(p,\mathbb{P}^{2}_{\mathbb{C}}, \alpha C)=(1-\alpha)^2$ provided that $0 \leq \alpha \leq 1$,
    \item if $q$ is a tacnode, then $e_{orb}(q,\mathbb{P}^{2}_{\mathbb{C}}, \alpha C) =(1-2 \alpha)$ provided that $0 \leq \alpha \leq \frac{1}{4}$
    \item if $q$ is an ordinary triple point, then $e_{orb}(q,\mathbb{P}^{2}_{\mathbb{C}}, \alpha C) \leq \bigg(1 - \frac{3\alpha}{2}\bigg)^2$ provided that $0 \leq \alpha \leq \frac{2}{3}$
\end{itemize}
Here we are read to present our proof of Theorem \ref{hirz}.
\begin{proof}
Let $C:= \ell_{1} + ... + \ell_{d} + C_{1} + ... + C_{k}$ be a divisor associated with $\mathcal{CL}$ and such that $m = {\rm deg} \, C = d + 2k \geq 12$ -- we will see in a moment why the last assumption is crucial. First of all, we need to choose $\alpha$ in such a way that $K_{\mathbb{P}^{2}_{\mathbb{C}}} + \alpha C$ is effective and log canonical. In order to obtain the effectivity of the pair, one needs to satisfy the condition that $-3 + \alpha(2k+d) \geq 0$ which implies $\alpha \geq 3/(2k+d)$. On the other hand, our pair is log-canonical if $\alpha \leq {\rm min} \{1, 2/3, 1/4\}$, so $\alpha \leq 1/4$. Due to these two reasons, we get $\alpha \in \bigg[3/(2k+d), 1/4 \bigg]$, and this condition is non-empty provided that $2k+d \geq 12$. From now on we take $\alpha = \frac{1}{4}$, and then apply the following inequality
\begin{equation}
\label{logMY}
\sum_{p \in {\rm Sing}(C)}  3\bigg( \alpha (\mu_{p} - 1) + 1 - e_{orb}(p,\mathbb{P}^{2}_{\mathbb{C}}, \alpha C) \bigg) \leq (3\alpha - \alpha^{2})m^{2} - 3\alpha m,
\end{equation}
where $\mu_{p}$ is the Milnor number of a singular point $p \in {\rm Sing}(C)$.
This gives us
$$3n_{2}\bigg(\frac{1}{4}(1-1)+1 - (1-1/4)^{2}\bigg) + 3t\bigg(\frac{1}{4}(3-1)+ 1 - (1-1/2)\bigg) + 3n_{3}\bigg(\frac{1}{4}(4-1) + 1 -(1 - 3/8)^{2}\bigg) $$
$$ \leq \sum_{p \in {\rm Sing}(C)}  3\bigg( \alpha (\mu_{p} - 1) + 1 - e_{orb}(p,\mathbb{P}^{2}_{\mathbb{C}}, \alpha C) \bigg) .$$
After some simple manipulations, we obtain
$$\frac{21}{16}n_{2} + 3t + \frac{261}{4}n_{3} \leq \frac{11}{16}m^{2} - \frac{3}{4}m.$$
Using (\ref{comb:naiv}), we have $m^2 = 2n_{2} + 4t + 6n_{3} + 4k+d$, and this implies 
$$11m^{2} - 12m = 22n_{2}+44t+66n_{3}+44k+11d - 24k - 12d = 22n_{2}+44t + 66n_{3} + 20k - d.$$
Combining the above computations together, we get
$$21n_{2} + 48t + \frac{261}{4}n_{3} \leq 22n_{2} +44t + 66n_{3} +20k - d,$$
which completes the proof.
\end{proof}

\section{Spectra of singularities and constraints on conic-line arrangements}
Let  $F_{\bullet}: \, H = F^{0} \supset ... \supset F^{p} \supset F^{p+1} \supset ...$ be the Hodge filtration on the (reduced) vanishing cohomology $H = H^{n}(X_{\infty})$ of an isolated singularity $f$, where $X_{\infty}$ denotes the canonical Milnor fibre. The filtration $F_{\bullet}$ is invariant with respect to the action of the semisimple part of the monodromy $T_{s}$. Hence $T_{s}$ acts on ${\rm Gr}_{F}^{p}H = F^{p}/F^{p+1}$ and ${\rm Gr}_{F}^{p}H = \bigoplus_{\lambda} ({\rm Gr}_{F}^{p})_{\lambda}$, where $({\rm Gr}_{F}^{p})_{\lambda} = {\rm Gr}_{F}^{p}H_{\lambda}$ is the eigensubspace corresponding to $\lambda$. Denote by
$$\mu_{p} = {\rm dim} \, {\rm Gr}_{F}^{p},\, \quad \mu_{\lambda}^{p} = {\rm dim} ({\rm Gr}_{F}^{p})_{\lambda}.$$
Then $\sum_{p} \mu^{p} = \mu$ is the Milnor number, $\sum_{\lambda} \mu_{\lambda}^{p} = \mu^{p}$, and $\sum_{p}\mu_{\lambda}^{p} = \mu_{\lambda}$ is the multiplicity of an eigenvalue $\lambda$, where $\mu_{\lambda} = {\rm dim} H_{\lambda}$.  Now to each eigenvalue $\lambda$ one defines 
$$\alpha = -(1/2\pi\iota){\rm log}(\lambda),$$
where $\iota^2=-1$. Since $\lambda$ is a root of the unity, $\alpha$ is a rational number defined modulo an integer. We normalize $\alpha$ according to the level $p$ of $\lambda$ with respect to $F_{\bullet}$ by the condition
$$\alpha = - \frac{1}{2\pi \iota} {\rm log} \, \lambda, \quad n-p-1 < \alpha \leq n-p,$$
where this $\lambda$ comes from the action of $T_s$ on ${\rm Gr}_{F}^{p}H$.
In this way, one obtains an element of the group $\mathbb{Z}^{\mathbb{Q}}$ of the form
$${\rm Sp}(f) = (\alpha_{1}) + ... + (\alpha_{\mu}) = \sum_{\alpha} n_{\alpha}\cdot (\alpha),$$
with $n_{\alpha} = \mu_{\lambda}^{p}$, which is called \textbf{the spectrum of the singularity}. The numbers $\alpha$ are called spectral numbers, and the coefficients $n_{\alpha} $ are the spectral multiplicities. Let us recall some basic properties of spectral numbers for isolated singularities of hypersurfaces.
\begin{enumerate}
\item $\alpha_{j} \in (0,n)$ if $n= \dim X$.
\item The spectrum is an invariant of a singularity.
\item Symmetry: $\alpha_{i} = \alpha_{\mu - i}$.
\item \emph{Thom-Sebastiani Principle}: If $f \in \mathbb{C}\{x_{0}, \ldots x_{m}\}$ and $g \in \mathbb{C}\{y_{0}, \ldots, y_{n}\}$ are two series in separate sets of variables, the expression
$$f \oplus g = f(x_{0}, \ldots x_{m}) + g(y_{0}, \ldots, y_{n}) \in \mathbb{C}\{x_{0}, \ldots x_{m},y_{0}, \ldots, y_{n}\}$$ is called the Thom-Sebastiani sum of $f$ and $g$. Then
$${\rm Sp}(f\oplus g) = \{ \alpha + \beta \, : \, \alpha \in {\rm Sp}(f), \beta\in {\rm Sp}(g)\}.$$
\item ${\rm Sp}(x^{m}) = \{ \frac{1}{m}, \frac{2}{m}, ..., \frac{m-1}{m}\}.$
\end{enumerate}
Using the formulae above, we can compute spectral numbers for nodes $(A_{1})$, tacnodes $(A_{3})$, and ordinary triple points $(D_{4})$.
\begin{enumerate}
\item[$(A_{1})$:] This singular point can be locally described by $x^{2} + y^{2} = 0$, so ${\rm Sp}(x^{2}) = \{\frac{1}{2}\}$, ${\rm Sp}(y^{2}) = \{\frac{1}{2}\}$, and then ${\rm Sp}(A_{1}) = 1\cdot 1$.
\item[$(A_{3})$:] This singular point can be locally described by $y^{2} + x^{4} = 0$, so ${\rm Sp}(y^{2}) = \{\frac{1}{2}\}$, ${\rm Sp}(x^{4}) = \{\frac{1}{4}, \frac{1}{2}, \frac{3}{4}\}$, and we obtain ${\rm Sp}(A_{3}) = 1 \cdot \frac{3}{4} + 1 \cdot 1 + 1 \cdot \frac{5}{4}$.
\item[$(D_{4})$:] This singular point can be locally described by $x^{3} + y^{3} = 0$, so ${\rm Sp}(x^{3}) = {\rm Sp}(y^{3}) = \{\frac{1}{3}, \frac{2}{3}\}$, and ${\rm Sp}(D_{4}) = 1\cdot \frac{2}{3} + 2 \cdot 1 + 1 \cdot \frac{4}{3}$. 
\end{enumerate}
Now we present the main result of this section.
\begin{thm}
\label{prop2}
Let $\mathcal{CL} = \{\ell_{1}, ..., \ell_{d}, C_{1}, ..., C_{k}\} \subset \mathbb{P}^{2}_{\mathbb{C}}$ be an arrangement of $d\geq 0$ lines and $k\geq 0$ smooth conics. Assume that $\mathcal{CL}$ has only $n_2$ nodes, $t$ tacnodes, and $n_3$ ordinary triple points. Let $C = \ell_{1} +...+\ell_{d}+C_{1} + ... + C_{k}$ and write  $m: = {\rm deg} \, C = d+2k$ as $m =3m' +\epsilon$ with $\epsilon \in \{1,2,3\}$.
Then one has
$$t + n_3\leq \binom{m-1}{2} +k  - \frac{m'(5m'-3)}{2}$$
and
$$n_3 \leq (m'+1)(2m'+1).$$
\end{thm}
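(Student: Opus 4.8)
I would deduce both inequalities from Varchenko's semicontinuity theorem for singularity spectra, feeding into it the local spectra of $A_1,A_3,D_4$ computed above together with the B\'ezout count \eqref{comb:naiv}. The elementary input is that these three spectra use only the five numbers $\tfrac23,\tfrac34,1,\tfrac54,\tfrac43$, all of which lie in the unit interval $\left(\tfrac13,\tfrac43\right]$, and that $\tfrac23$ occurs only for $D_4$. Consequently, for a reduced plane curve $C$ all of whose singularities are nodes, tacnodes or ordinary triple points, and a half-open unit interval $I$, the count $N(C,I):=\sum_{p\in{\rm Sing}(C)}\#\bigl({\rm Sp}(C,p)\cap I\bigr)$ is an explicit linear form in $n_2,t,n_3$; in particular $N\bigl(C,\left(-\tfrac13,\tfrac23\right]\bigr)=n_3$, whereas $N\bigl(C,\left(\tfrac13,\tfrac43\right]\bigr)=n_2+3t+4n_3$, which is precisely the total Milnor number $\sum_{p}\mu_p$ of $C$.

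Varchenko's theorem, in its projective form, states that for a reduced plane curve of degree $m$ with isolated singularities and any half-open unit interval $I$ one has $N(C,I)\le\#({\rm Sp}_m\cap I)$, where ${\rm Sp}_m$ is the spectrum $\{\tfrac{i+j}{m}:1\le i,j\le m-1\}$ of the ordinary $m$-fold point. Applying this with $I=\left(-\tfrac13,\tfrac23\right]$ gives $n_3\le\#\{(i,j):i,j\ge 1,\ i+j\le\tfrac{2m}{3}\}$, a triangular number which, writing $m=3m'+\epsilon$, one checks is at most $(m'+1)(2m'+1)$ (with equality when $\epsilon=3$). For the first inequality, apply the theorem to $I=\left(\tfrac13,\tfrac43\right]$ to bound $\sum_p\mu_p$, and subtract the rewriting $n_2+2t+3n_3=\binom m2-k$ of \eqref{comb:naiv}: since $(n_2+3t+4n_3)-(n_2+2t+3n_3)=t+n_3$, this yields $t+n_3\le\#\bigl({\rm Sp}_m\cap\left(\tfrac13,\tfrac43\right]\bigr)-\binom m2+k$, and it remains to evaluate the lattice count on the right and compare it with $\binom{m-1}{2}-\tfrac{m'(5m'-3)}{2}$.

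The genuinely delicate point is the precise form of semicontinuity one is entitled to: one needs the projective version of Varchenko's theorem, licensing the degree-$m$ homogeneous singularity as comparison model, and one must check that the half-open unit intervals used above belong to its semicontinuity domain — using open intervals instead would not be justified, which is what dictates the endpoint conventions. The remaining work is the routine but careful evaluation of $\#\{(i,j):i,j\ge1,\ a<i+j\le b\}$ as a difference of triangular numbers: the values of the relevant bounds at the endpoints $\tfrac13,\tfrac23,\tfrac43$ depend on $m\bmod 3$ (the constraint $i,j\le m-1$ being vacuous in all cases considered), which is exactly why the statement is phrased via $m=3m'+\epsilon$ and why the three residues must be treated separately.
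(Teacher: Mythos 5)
Your proposal is correct and follows essentially the same route as the paper: semicontinuity of the spectrum (Steenbrink/Varchenko) against the ordinary $m$-fold point as comparison model, with the domains $\left(\tfrac13,\tfrac43\right]$ and $\left(-\tfrac13,\tfrac23\right]$, the identifications $N\bigl(C,(\tfrac13,\tfrac43]\bigr)=n_2+3t+4n_3$ and $N\bigl(C,(-\tfrac13,\tfrac23]\bigr)=n_3$, and subtraction of the B\'ezout count to isolate $t+n_3$. The lattice counts you defer are exactly the sums $S_1,S_2$ the paper evaluates (uniformly in $\epsilon$ via lower bounds rather than case by case), and they give the stated constants.
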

\begin{proof}
We are going to use the theory of spectra of singularities. Recall that  if $(X,0)$ is the union of $m$ lines passing through the origin of $\C^2$, then the corresponding spectrum is
$${\rm Sp}(X,0)= \sum_{j=1}^{m-1} j \cdot \frac{j+1}{m}  +\sum_{j=2}^{m-1}(m-j) \cdot \frac{m + j-1}{m} .$$
We apply the semicontinuity property of the spectrum in the form presented by Steenbrink in \cite{S} (see also \cite[Theorem 8.9.8]{Ku}) for the semicontinuity domain $B=(\frac{1}{3},\frac{4}{3}]$.
If $L$ is a generic line in $\mathbb{P}^{2}_{\mathbb{C}}$, then the trace of the arrangement  $\mathcal{CL}$ on the complement $\mathbb{C}^2=\mathbb{P}^{2}_{\mathbb{C}} \setminus L$ can be identified with a deformation $X_s$ of a singularity of type $(X,0)$ introduced above. Therefore we get the following equation
\begin{equation}
\label{E1}
{\rm deg}_B \sum_y {\rm Sp}(X_s, y) = n_2 + 3t + 4n_3,
\end{equation}
where the above sum is over singular points $y \in X_s$, and ${\rm deg}_{B} \, \sum_y {\rm Sp}(X_s , y)$ denotes the sum of all spectral multiplicities for spectral numbers that are contained in domain $B$.

On the other hand, the total degree of the spectrum ${\rm Sp}(X,0)$ is equal to the Milnor number $\mu(X,0)=(m-1)^2$. To get the degree for the restriction of the spectrum ${\rm Sp}(X,0)$ to the interval $B=(\frac{1}{3},\frac{4}{3}]$, we have to subtract the sum $S_1$ of the multiplicities of the spectral numbers $\alpha$
such that  $\alpha\leq \frac{1}{3}$, and the sum $S_2$ of the multiplicities of the spectral numbers $\alpha > \frac{4}{3}$. By the symmetry property of the spectrum, the last case can be replaced by $\alpha < \frac{2}{3}$. \\
The first sum $S_1$ is at least equal to
$$S_1 = 1 + 2 + \ldots + (m'-1)=\frac{m'(m'-1)}{2}.$$
The second sum $S_2$ is at least equal to
$$S_2 = 1 + 2 +\ldots + (2m'-1)=m'(2m'-1).$$
It follows that
\begin{equation}
\label{E7}
{\rm deg}_B \, {\rm Sp}(X,0) \leq (m-1)^2 - S_1 - S_2 =(m-1)^2-\frac{m'(5m'-3)}{2}.
\end{equation}
Therefore the semicontinuity theorem implies that
\begin{equation}
\label{E8}
n_2 + 3t + 4n_3 \leq (m-1)^2-\frac{m'(5m'-3)}{2}.
\end{equation}
Observe that the combinatorial count (\ref{comb:naiv}) can be rewritten as
\begin{equation}
\label{E4}
n_2 + 2t + 3n_3 = \binom{m}{2} - k.
\end{equation}
By the above, we can conclude that
$$t + n_3 \leq (m-1)^2- \binom{m}{2} + k -\frac{m'(5m'-3)}{2}=$$
$$= \binom{m-1}{2} + k -\frac{m'(5m'-3)}{2}.$$
For the second inequality, we choose the semicontinuity domain $B=(-\frac{1}{3},\frac{2}{3}]$, and for this choice of $B$ we have
\begin{equation}
\label{E9}
{\rm  deg}_B \, \sum_y {\rm Sp} (X_s,y) = n_3,
\end{equation}
where the sum is taken over all the singular points $y \in X_s$. On the other hand, we have
\begin{equation}
\label{E6}
{\rm deg}_B \, {\rm Sp}(X,0) \leq 1 + 2+ \ldots + (2m'+1)=(m'+1)(2m'+1).
\end{equation}
This completes the proof.
\end{proof}
\begin{ex}
\label{exH1}
These bounds are rather good, at least in some cases. In order to see this for the bound involving $t+n_3$, consider Figure \ref{con-lin} where we present a conic-line arrangement with $d=3$ and $k=2$ having $t=5$ and $n_3=3$. In this case $m=7$, hence $m'=2$ and the first inequality in Theorem \ref{prop2} is
$$8=t+n_3 \leq 10.$$
Next, consider the dual Hesse arrangement given by
$$(x^3-y^3)(y^3-z^3)(x^3-z^3)=0,$$
which has $n_3=12$ triple points. In this case $m=9$, $m'=2$, and the second inequality in Theorem \ref{prop2} gives us
$$12=n_3 \leq 15.$$
\end{ex}

\begin{rk}
\label{rk2}
Since $m'=(m-\epsilon)/3$ and $k \leq m/2$, it follows that we have
$$t+n_3 \leq \frac{1}{18}\bigg(4m^2+m(10\epsilon-9)-5\epsilon^2-9\epsilon+18\bigg)\approx \frac{2}{9}m^{2} + O(m).$$
\end{rk}

\section{Combinatorial constraints on the freeness of reduced curves}
We begin with a general introduction to the subject. Let $C$ be a reduced curve $\mathbb{P}^{2}_{\mathbb{C}}$ of degree $m$ given by $f \in S :=\mathbb{C}[x,y,z]$. We denote by $J_{f}$ the Jacobian ideal generated by the partials derivatives $\partial_{x}f, \, \partial_{y}f, \, \partial_{z}f$. Moreover, we denote by $r:={\rm mdr}(f)$ the minimal degree of a relation among the partial derivatives, i.e., the minimal degree $r$ of a triple $(a,b,c) \in S_{r}^{3}$ such that 
$$a\cdot \partial_{x} f + b\cdot \partial_{y}f + c\cdot \partial_{z}f = 0.$$
We denote by $\mathfrak{m} = \langle x,y,z \rangle$ the irrelevant ideal. Consider the graded $S$-module $N(f) = I_{f} / J_{f}$, where $I_{f}$ is the saturation of $J_{f}$ with respect to $\mathfrak{m}=\langle x,y,z\rangle$.
\begin{definition}
We say that a reduced plane curve $C$ is \emph{free} if $N(f) = 0$. 
\end{definition}
Let us recall that for a reduced curve $C : f=0$ we define the Arnold exponent $\alpha_{C}$ which is the minimum of the Arnold exponents of the singular points $p$ in $C$. Using the modern language, the Arnold exponents of singular points are nothing else than the log canonical thresholds of singularities.
\begin{definition}
Let $C \, : \, f = 0$ be a reduced curve in $\mathbb{C}^2$ which is singular at $0 \in \mathbb{C}^{2}$. Denote by $\phi : Y \rightarrow \mathbb{C}^{2}$ the standard minimal resolution of singularities, i.e., the smallest resolution that has simple normal crossings (which exists and it is unique). We write then $K_{Y} = \phi^{*} K_{\mathbb{C}^{2}}+\sum_{i} a_{i} E_{i}$ and $\phi^{*}C = \phi_{*}^{-1} C + \sum_{i} b_{i} E_{i}$, where $=$ means the linear equivalence. Then the log canonical threshold of $C$ in $\mathbb{C}^{2}$ is defined as
$$c_{0}(f) = {\rm min}_{i} \bigg\{ \frac{a_{i}+1}{b_{i}}\bigg\}.$$
\end{definition}
Using this local (analytical) description, the Arnold exponent $\alpha_{C}$ of $C$ is then the minimum over all log canonical thresholds of singular points. 
In order to compute the actual values of the log canonical thresholds, we can us the following result -- see for instance \cite[Theorem 4.1]{Cheltsov}. 
\begin{thm}
Let $C$ be a reduced curve in $\mathbb{C}^{2}$ which has degree $m$. Then $c_{0}(f) \geq \frac{2}{m}$, and the equality holds if and only if $C$ is a union of $m$ lines passing through $0$.
\end{thm}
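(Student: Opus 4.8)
The plan is to reformulate the statement as: the pair $(\C^{2},\tfrac{2}{m}C)$ is log canonical at $0$, and it is klt at $0$ unless $C$ is a union of $m$ distinct lines through $0$. First I would make the harmless reduction of discarding the components of $C$ that miss $0$: this changes neither $c_{0}(f)$ nor the conclusion and only decreases $m$, so I may assume every component of $C$ passes through $0$. Put $e=\mult_{0}(C)\le m$; since $f$ is reduced, $e=m$ holds exactly when $f$ is a homogeneous polynomial with distinct linear factors, i.e. $C$ is a union of $m$ distinct concurrent lines, and then a single blow-up of $0$ is already a log resolution — the exceptional curve has discrepancy $1$ and $f$-order $m$, while the strict transforms of the lines are disjoint and meet it transversally — so $c_{0}(f)=\min\{\tfrac{2}{m},1\}=\tfrac{2}{m}$. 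Conversely, the first blow-up of $0$ always contributes a divisor of ratio $\tfrac{2}{\mult_{0}(C)}$, whence $c_{0}(f)\le \tfrac{2}{e}\le\tfrac{2}{m}$; so $c_{0}(f)=\tfrac{2}{m}$ forces $e=m$, i.e. the line case. This disposes of the equality assertion and reduces everything to proving the \emph{strict} inequality $c_{0}(f)>\tfrac{2}{m}$ when $e<m$.

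If $e\le m/2$, then $\mult_{0}\big(\tfrac{2}{m}C\big)=\tfrac{2e}{m}\le 1$, and an effective $\Q$-divisor of multiplicity $\le 1$ at a point of a smooth surface is log canonical there (a standard two-dimensional criterion, proved by noting that the multiplicity is the coefficient of the first exceptional curve and only decreases under further blow-ups), so $c_{0}(f)\ge\tfrac{2}{m}$ at once. The substantial case is $m/2<e<m$, where I would run the minimal log resolution $\phi:Y\to\C^{2}$ as a sequence of point blow-ups at $0=p_{0},p_{1},\dots,p_{N}$ with exceptional curves $E_{0},\dots,E_{N}$. Writing $\nu_{i}=\mult_{p_{i}}(\text{strict transform of }C)$, $a_{i}=\mathrm{ord}_{E_{i}}(K_{Y}-\phi^{*}K_{\C^{2}})$ and $b_{i}=\mathrm{ord}_{E_{i}}(\phi^{*}f)$, the proximity relations read $a_{i}=1+\sum_{j\to i}a_{j}$ and $b_{i}=\nu_{i}+\sum_{j\to i}b_{j}$ (the sums over $j$ such that $p_{i}$ is proximate to $p_{j}$, i.e. $p_{i}\in E_{j}$), while $c_{0}(f)=\min_{i}\tfrac{a_{i}+1}{b_{i}}$, the strict transform of $C$ contributing only ratios $1>\tfrac{2}{m}$. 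Introduce the slack $\delta_{i}:=m(a_{i}+1)-2b_{i}$; then $\delta_{0}=2(m-e)>0$ and $\delta_{i}=2(m-\nu_{i})-m\,\#\{j\to i\}+\sum_{j\to i}\delta_{j}$, so it suffices to prove $\delta_{i}>0$ for every $i$.

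I would establish $\delta_{i}>0$ by induction along the resolution tree, feeding in three facts: $\nu_{i}\le e<m$; the proximity inequalities $\nu_{j}\ge\sum_{i:\,j\to i}\nu_{i}$ for the multiplicities of $C$; and the genus bound $\sum_{i}\binom{\nu_{i}}{2}=\delta_{(C,0)}\le p_{a}(C)=\binom{m-1}{2}$. For a free infinitely near point ($\#\{j\to i\}=1$, parent $j$) the recursion gives $\delta_{i}=(m-2\nu_{i})+\delta_{j}$, which is positive as soon as $\nu_{i}\le m/2$; so the work lies entirely in the points with $\nu_{i}>m/2$, and above all in the \emph{satellite} points, where $\delta_{i}=-2\nu_{i}+\delta_{j}+\delta_{j'}$ and one must show that the slack accumulated at the two parent divisors beats $2\nu_{i}$. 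This is the main obstacle, and it is exactly where the genus bound becomes indispensable: it forbids, for instance, three infinitely near points of multiplicity $>m/2$ (which would force $3\binom{m-1}{2}\le\binom{m-1}{2}$), and more generally caps how long the multiplicities can remain large along any chain. Closing the induction cleanly will require strengthening the hypothesis — tracking $\delta_{i}$ jointly with $m-\nu_{i}$ and with the multiplicities of the points proximate to $p_{i}$ — so that the positive contributions $2(m-\nu_{i})$ together with the remaining ``genus budget'' can be charged against the deficits $-2\nu_{i}$ that appear at satellites.
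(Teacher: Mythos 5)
First, a point of reference: the paper does not prove this statement at all --- it is quoted from the literature (``see for instance [Theorem 4.1]{Cheltsov}''), so your attempt cannot be measured against an argument in the text and must stand on its own. It does not yet do so. The preliminary reductions are sound (discarding components missing $0$; identifying $e:=\mult_0(C)=m$ with the concurrent-lines case; the sub-case $e\le m/2$ via the multiplicity criterion; and the recursion $\delta_i=2(m-\nu_i)-m\,\#\{j\to i\}+\sum_{j\to i}\delta_j$ for $\delta_i=m(a_i+1)-2b_i$ is set up correctly). But the heart of the theorem in the hard range $m/2<e<m$ is precisely the positivity of $\delta_i$ at satellite points, i.e.\ $\delta_j+\delta_{j'}>2\nu_i$, and that is exactly the step you leave open (``closing the induction cleanly will require strengthening the hypothesis\dots''). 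So this is a plan, not a proof. Moreover, the one concrete piece of evidence you offer that the genus bound can close it is false: three infinitely near points of multiplicity $\nu>m/2$ contribute at least $3\binom{\lceil (m+1)/2\rceil}{2}\approx\frac{3}{8}m^2$ to $\sum_i\binom{\nu_i}{2}$, not $3\binom{m-1}{2}$, and this is \emph{below} the budget $\binom{m-1}{2}\approx\frac{1}{2}m^2$ once $m$ is large --- already for $m=20$ the multiplicity sequence $(11,11,11)$ is compatible with both the genus bound and the proximity inequalities. So the mechanism you propose for beating the satellite deficits is not established and is not even obviously available.

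Two smaller defects. The chain ``$c_0(f)\le 2/e\le 2/m$'' is backwards ($e\le m$ gives $2/e\ge 2/m$), so that sentence proves nothing; your real plan --- strict inequality whenever $e<m$ --- is the right one, but then the boundary case $e=m/2$ of your first reduction only yields $c_0(f)\ge 2/m$ from the multiplicity-$\le 1$ criterion, whereas the equality assertion needs the klt version (multiplicity $<1$) or a separate argument there. If you want an argument that actually closes, I would drop the infinitely-near-point bookkeeping in favour of the standard global one: if $c:=c_0(f)<2/m\le 1$, the multiplier ideal $\mathcal{J}(\PP^2_{\C},cC)$ is nontrivial and cosupported on a nonempty finite scheme $Z\ni 0$, while Nadel vanishing for $\OO(2)$ (ample minus $cC$, since $cm<2$) gives $H^1(\OO(-1)\otimes\mathcal{J})=0$ and hence $H^0(\OO_Z)=0$, a contradiction; the equality case then reduces to a short local analysis of the first blow-up.
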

\begin{rk}
If $p=(0,0) \in \mathbb{C}^{2}$ is an ordinary singularity of multiplicity $r$ determined by $C \, : \, f=0$, then $c_{0}(f) = \frac{2}{r}$.
\end{rk}

Now we need to compute the log canonical threshold for tacnodes. Since tacnodes are quasi-homogeneous singularities, then we can use the following pattern (cf. \cite[Formula 2.1]{DimcaSernesi}).

Recall that the germ $(C,p)$ is weighted homogeneous of type $(w_{1},w_{2};1)$ with $0 < w_{j} \leq 1/2$ if there are local analytic coordinates $y_{1}, y_{2}$ centered at $p=(0,0)$ and a polynomial $g(y_{1},y_{2})= \sum_{u,v} c_{u,v} y_{1}^{u} y_{2}^{v}$ with $c_{u,v} \in \mathbb{C}$, where the sum is taken over all pairs $(u,v) \in \mathbb{N}^{2}$ with $u w_{1} + v w_{2}=1$. In this case, we have $$c_{0}(g) = w_{1}+w_{2}.$$
\begin{rk}
Let $g = y^{2} + x^{4}$, so $g$ defines a tacnode at $p=(0,0)$. Then $w_{1}=\frac{1}{2}, w_{2} = \frac{1}{4}$, and hence we have $c_{0}(g) = \frac{3}{4}$
\end{rk}
In order to show our main result for this section, recall the following \cite[Theorem 2.1]{DimcaSernesi}.
\begin{thm}[Dimca-Sernesi]
Let $C \, : \, f = 0$ be a reduced curve of degree $m$ in $\mathbb{P}^{2}_{\mathbb{C}}$ having only quasi-homogeneous singularities. Then $${\rm mdr}(f) \geq \alpha_{C}\cdot m -2.$$
\end{thm}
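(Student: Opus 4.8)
The plan is to convert the existence of a low-degree Jacobian syzygy for $f$ into Hodge-theoretic data attached to the singularities of $C$, and to bound that data using the spectrum, in the same circle of ideas as the proof of Theorem \ref{prop2}. First one reduces to the case where $C$ is singular and $r:={\rm mdr}(f)\le m-2$: any reduced plane curve singularity has log canonical threshold $\le 1$ (in the definition of $c_{0}(f)$ take $E$ to be a local component of $C$), so $\alpha_{C}\le 1$ and hence $\alpha_{C}m-2\le m-2$; and $r=m-1$ occurs precisely when $C$ is smooth, since then $\partial_{x}f,\partial_{y}f,\partial_{z}f$ form a regular sequence whose only syzygies are the Koszul ones, of degree $m-1$.

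So fix a nonzero triple $\rho=(a,b,c)\in S_{r}^{3}$ with $a\,\partial_{x}f+b\,\partial_{y}f+c\,\partial_{z}f=0$. The first observation is that $\rho$ becomes a differential form: the twisted $2$-form $\eta_{\rho}=a\,dy\wedge dz+b\,dz\wedge dx+c\,dx\wedge dy$ on $\mathbb{C}^{3}$ satisfies $df\wedge\eta_{\rho}=(a\,\partial_{x}f+b\,\partial_{y}f+c\,\partial_{z}f)\,dx\wedge dy\wedge dz=0$, and using the Euler relation $x\,\partial_{x}f+y\,\partial_{y}f+z\,\partial_{z}f=mf$ one corrects $\eta_{\rho}$ into an invariant form descending to $\mathbb{P}^{2}_{\mathbb{C}}$. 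Running this through the pole-order description of the (vanishing) cohomology of $\mathbb{P}^{2}_{\mathbb{C}}\setminus C$, minimality of $r$ prevents any further reduction of the pole order, so one obtains a nontrivial class sitting at ``level'' $(r+2)/m$; said otherwise, $(r+2)/m$ occurs as a spectral number of the cone over $C$ that is not already present for a smooth curve of degree $m$ (otherwise the class would reduce further, yielding a syzygy of degree $<r$).

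The last step compares the pole order filtration with the Hodge filtration, and this is exactly where the hypothesis that every singularity is quasi-homogeneous enters: for a weighted-homogeneous germ $(C,p)$ of type $(w_{1},w_{2};1)$ the two filtrations agree locally, with no obstructing defect term, and the local spectrum begins at $w_{1}+w_{2}=c_{0}$, i.e. at the Arnold exponent $\alpha_{p}$. Assembling the local contributions, every spectral number of the cone over $C$ lying below the range contributed by a smooth degree-$m$ curve is $\ge\min_{p}\alpha_{p}=\alpha_{C}$. Applying this to the number $(r+2)/m$ produced above gives $(r+2)/m\ge\alpha_{C}$, that is, ${\rm mdr}(f)=r\ge\alpha_{C}m-2$.

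I expect the main difficulty to be the quantitative core of the last two paragraphs: pinning down \emph{exactly} which graded piece of the pole order filtration a minimal syzygy of degree $r$ controls, reconciling all the normalization shifts among $r$, the pole order, and the spectral numbers, and proving that quasi-homogeneity at each singular point is precisely what makes the pole-order and Hodge filtrations coincide on the relevant pieces, so that the spectral lower bound $\alpha_{C}$ legitimately applies. A less Hodge-theoretic alternative would combine the du Plessis--Wall estimates relating ${\rm mdr}(f)$ to the global Tjurina number $\tau(C)$ with the upper bound on $\tau(C)$ coming from semicontinuity of the spectrum as used above, but it is the pole-order argument that produces the sharp additive constant $-2$.
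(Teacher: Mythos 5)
The paper does not actually prove this statement: it is recalled verbatim from \cite[Theorem 2.1]{DimcaSernesi} and used as a black box, so there is no internal argument to measure yours against. That said, your outline correctly reconstructs the architecture of the original Dimca--Sernesi proof: a Jacobian syzygy $\rho$ of degree $r$ gives a $2$-form $\eta_{\rho}$ with $df\wedge\eta_{\rho}=0$, hence a class in the pole-order filtration on the cohomology of the complement (equivalently, of the Milnor fiber of the cone over $C$), whose position is then bounded below by the local spectra via the Hodge filtration. The road map is the right one.

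As a proof, however, it has genuine gaps, and they sit exactly where you yourself flag ``the main difficulty''. First, the claim that a \emph{minimal} syzygy of degree $r$ produces a nonzero class ``at level $(r+2)/m$'' that cannot be reduced further is the precise identification of the degree-$r$ piece of the syzygy module with a graded piece of the pole-order filtration; this is a bookkeeping theorem in its own right (due to Dimca--Saito and Dimca--Sticlaru), it is where the additive constant $-2$ originates, and it does not follow formally from $df\wedge\eta_{\rho}=0$. Note also that the cone over $C$ has a one-dimensional singular locus, so ``spectral numbers of the cone'' must be routed through the transversal singularities of $C$ and the monodromy eigenspaces --- that translation is the delicate part, not a footnote. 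Second, the assertion that quasi-homogeneity of each germ makes the pole-order and Hodge filtrations coincide ``locally, with no obstructing defect'' is itself a substantial theorem and is precisely where the hypothesis of the statement is consumed; for general singularities the two filtrations differ, which is why the theorem is stated only in the quasi-homogeneous case. The one ingredient you may safely quote is that the minimal spectral number of an isolated plane curve singularity equals its log canonical threshold, whence the lower bound $\alpha_{C}$. So you have the correct skeleton of the published proof, but none of its load-bearing content is supplied; and, as you note, the du Plessis--Wall fallback would not recover the sharp constant $-2$.
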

Since nodes, tacnodes, and ordinary triple points are quasi-homogeneous singularities, then we can prove the following result.
\begin{prop}
\label{prop:bound9}
Let $C \, : \, f=0$ be a reduced curve of degree $m$ in $\mathbb{P}^{2}_{\mathbb{C}}$ having only nodes, tacnodes, and ordinary triple points as singularities. Then
$${\rm mdr}(f) \geq \frac{2}{3}m-2.$$
In particular, if $C$ is free, then $m \leq 9$.
\end{prop}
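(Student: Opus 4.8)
The plan is to derive the bound on $\mathrm{mdr}(f)$ directly from the Dimca--Sernesi theorem quoted above, after computing the Arnold exponent $\alpha_C$ for a curve with only nodes, tacnodes, and ordinary triple points. First I would recall that $\alpha_C$ is by definition the minimum of the log canonical thresholds $c_0$ over the singular points of $C$, and that each of the three allowed singularity types is quasi-homogeneous, so the Dimca--Sernesi inequality $\mathrm{mdr}(f)\geq \alpha_C\cdot m-2$ applies. Then I would compute: for a node ($A_1$), the singularity is ordinary of multiplicity $2$, so $c_0=\frac{2}{2}=1$; for an ordinary triple point ($D_4$), it is ordinary of multiplicity $3$, so $c_0=\frac{2}{3}$; for a tacnode ($A_3$), writing $g=y^2+x^4$ as weighted homogeneous of type $(1/2,1/4;1)$ gives $c_0=\frac{1}{2}+\frac{1}{4}=\frac{3}{4}$. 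These are exactly the values recorded in the remarks preceding the statement.

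Next I would observe that $\min\{1,\tfrac{3}{4},\tfrac{2}{3}\}=\tfrac{2}{3}$, hence $\alpha_C\geq \tfrac{2}{3}$ regardless of which of these singularities actually occur on $C$ (with the convention that a smooth curve has $\alpha_C=\infty$, so the bound is vacuously fine in that case, though then $\mathrm{mdr}(f)$ is large anyway). Substituting into the Dimca--Sernesi inequality yields
$$\mathrm{mdr}(f)\geq \alpha_C\cdot m-2\geq \frac{2}{3}m-2,$$
which is the first assertion.

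For the ``in particular'' clause I would invoke the standard fact that for a free curve $C:f=0$ of degree $m$ with exponents $(d_1,d_2)$ one has $d_1+d_2=m-1$ and $r=\mathrm{mdr}(f)=d_1\leq d_2$, so in particular $\mathrm{mdr}(f)=d_1\leq \tfrac{m-1}{2}$. Combining this with the lower bound just proved gives $\tfrac{2}{3}m-2\leq \tfrac{m-1}{2}$, i.e. $4m-12\leq 3m-3$, hence $m\leq 9$.

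I do not anticipate a serious obstacle here: the only genuine content is correctly matching each singularity type to its log canonical threshold and citing the two black-box theorems. The one point requiring a little care is the degenerate case where $C$ is smooth (no singularities at all), where $\alpha_C$ is not defined by the stated minimum; in that situation one notes that a smooth plane curve of degree $m\geq 2$ is never free (its Jacobian ideal is $\mathfrak{m}$-primary but not a complete intersection of the right degrees), so the conclusion $m\leq 9$ need only be checked for singular $C$, and there the argument above applies verbatim. I would also remark that the inequality $\mathrm{mdr}(f)\leq (m-1)/2$ for free curves is classical (it follows from the Hilbert--Burch/Saito criterion), so no new work is needed.
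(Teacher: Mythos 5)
Your argument is correct and follows essentially the same route as the paper: compute the log canonical thresholds $1$, $\tfrac{3}{4}$, $\tfrac{2}{3}$ for nodes, tacnodes, and ordinary triple points, conclude $\alpha_C \geq \tfrac{2}{3}$, apply the Dimca--Sernesi bound ${\rm mdr}(f)\geq \alpha_C m -2$, and combine with ${\rm mdr}(f)\leq \tfrac{m-1}{2}$ for free curves to get $m\leq 9$. Your extra care about the smooth case is harmless (though note the Jacobian ideal of a smooth curve \emph{is} a complete intersection; the right reason it is not free is that $N(f)=S/J_f\neq 0$ for $m\geq 2$), and the paper simply does not address that degenerate situation.
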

\begin{proof}
Since $C$ has nodes, tacnodes, and ordinary triple points as singularities, then
$$\alpha_{C} = {\rm min} \bigg\{1, \frac{3}{4}, \frac{2}{3} \bigg\} = \frac{2}{3},$$
so by the above result we have
$${\rm mdr}(f) \geq \frac{2}{3}m - 2.$$ 
If $C$ is a free curve, then
$$\frac{2}{3}m - 2 \leq {\rm mdr}(f) \leq \frac{m-1}{2}, $$
which gives $m \leq 9$. 
\end{proof}
\begin{rk}
If we restrict our attention to reduced free curves with nodes and tacnodes, then analogous computations as above give $m \leq 5$, and this bound is sharp according to what we shall see in Example \ref{ex5.5} in the forthcoming section. In fact, we can show that for every $m \in \{3,4,5\}$ there exists a conic-line arrangement having nodes and tacnodes with $k\geq 1$ that is free.
\end{rk}
\begin{rk}
Observe that Proposition \ref{prop:bound9} is sharp in the class of reduced free curves -- the dual Hesse arrangement of $9$ lines and $12$ triple points considered in Example \ref{exH1} is free.
\end{rk}
Let $\mathcal{CL} = \{\ell_{1}, ..., \ell_{d}, C_{1}, ..., C_{k}\} \subset \mathbb{P}^{2}_{\mathbb{C}}$ be a free arrangement of $d \geq 1$ lines and $k\geq 1$ conics. We are going to use the following homological characterization of the freeness, see for instance \cite{DimcaSticlaru}, which can be checked on specific examples using \verb{Singular{ \cite{Singular}, or other computer algebra software.
\begin{thm}
 \label{thmF}
Let $C \subset \mathbb{P}^{2}_{\mathbb{C}}$ be a reduced curve of degree $m$ and let $f=0$ be its defining equation. Then $C$ is free if and only if the minimal free resolution of the Milnor algebra $M(f) = S/J_{f}$ has the following form:
\begin{equation*}
0 \rightarrow S(-d_{1}-(m-1)) \oplus S(-d_{2}-(m-1)) \rightarrow S^{3}(-m+1)\rightarrow S \rightarrow M(f) \rightarrow 0
\end{equation*}
with $d_{1} + d_{2} = m-1$.  In particular, if $d_1 \leq d_2$, then ${\rm mdr}(f)=d_1 \leq \frac{m-1}{2}$. \end{thm}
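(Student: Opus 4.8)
The plan is to read off the shape of the resolution from the Auslander--Buchsbaum formula applied to the Milnor algebra $M(f)=S/J_{f}$, using the identification $N(f)=I_{f}/J_{f}=H^{0}_{\mathfrak m}(M(f))$. Since a free curve is necessarily singular (for a smooth plane curve one has $N(f)=M(f)\neq 0$) and $C$ is reduced, its singular subscheme is finite, so $J_{f}$ has height $2$ in $S$; hence $\dim_{S}M(f)=1$ and, by Auslander--Buchsbaum, $\operatorname{pd}_{S}M(f)=3-\operatorname{depth}_{\mathfrak m}M(f)\in\{2,3\}$. Because $N(f)=H^{0}_{\mathfrak m}(M(f))$, freeness ($N(f)=0$) is equivalent to $\operatorname{depth}_{\mathfrak m}M(f)\geq 1$, i.e. to $\operatorname{pd}_{S}M(f)=2$, i.e. to the graded minimal free resolution of $M(f)$ having length two, say $0\to F_{2}\to F_{1}\to F_{0}\to M(f)\to 0$.

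I would then identify the three terms. Since $M(f)$ is cyclic, $F_{0}=S$. Since $J_{f}$ is generated by the three partials $\partial_{x}f,\partial_{y}f,\partial_{z}f$, all of degree $m-1$, and — for any reduced curve that is not a cone, in particular any conic--line arrangement with $k\geq 1$ — none of them lies in the $\C$-span of the other two, they form a minimal generating system, so $F_{1}=S^{3}(-(m-1))$. The kernel of $F_{1}\to F_{0}$ is, after the twist by $m-1$, precisely the graded module $AR(f)$ of triples $(a,b,c)$ with $a\partial_{x}f+b\partial_{y}f+c\partial_{z}f=0$ (whose minimal-degree nonzero element computes $\operatorname{mdr}(f)$); since $\operatorname{pd}_{S}M(f)=2$ this kernel is free, and a rank count in the exact complex (here $M(f)$ has rank $0$, being torsion) gives $\operatorname{rank}F_{2}=3-1=2$. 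Writing $F_{2}=S(-d_{1}-(m-1))\oplus S(-d_{2}-(m-1))$, the $d_{i}$ are the degrees of a minimal generating set of $AR(f)$, so $AR(f)\cong S(-d_{1})\oplus S(-d_{2})$, which is exactly the displayed resolution.

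The numerical relation $d_{1}+d_{2}=m-1$ then follows from the Hilbert series: from the resolution
$$H_{M(f)}(t)=\frac{1-3t^{m-1}+t^{d_{1}+(m-1)}+t^{d_{2}+(m-1)}}{(1-t)^{3}},$$
and since $\dim_{S}M(f)=1$ the numerator $P(t)$ is divisible by $(1-t)^{2}$ but not by $(1-t)^{3}$; here $P(1)=1-3+1+1=0$ automatically, while $P'(1)=0$ reads $-3(m-1)+(d_{1}+(m-1))+(d_{2}+(m-1))=0$, i.e. $d_{1}+d_{2}=m-1$. Finally, if $d_{1}\leq d_{2}$ then the least degree of a nonzero homogeneous element of $AR(f)\cong S(-d_{1})\oplus S(-d_{2})$ is $d_{1}$, which is by definition $\operatorname{mdr}(f)$, and $2d_{1}\leq d_{1}+d_{2}=m-1$ yields $\operatorname{mdr}(f)\leq(m-1)/2$. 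The converse is immediate: a resolution of the displayed shape forces $\operatorname{pd}_{S}M(f)=2$, hence $\operatorname{depth}_{\mathfrak m}M(f)\geq 1$, hence $N(f)=0$ and $C$ is free.

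I expect the only genuinely delicate point to be the normalization $F_{1}=S^{3}(-(m-1))$: it fails precisely for cones (unions of lines through a single point), where one partial is a $\C$-linear combination of the other two and $\operatorname{mdr}(f)=0$, but that degenerate family is irrelevant here since every conic--line arrangement with $k\geq 1$ — and every curve to which the theorem is applied — is not a cone. Everything else is one application of Auslander--Buchsbaum and routine bookkeeping with Hilbert series.
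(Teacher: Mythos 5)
The paper does not prove this statement; it is quoted from \cite{DimcaSticlaru} as a known homological characterization of freeness. Your argument is precisely the standard proof of that cited result --- identify $N(f)$ with $H^{0}_{\mathfrak m}(M(f))$, use Auslander--Buchsbaum to convert $N(f)=0$ into $\operatorname{pd}_{S}M(f)=2$, and extract $d_{1}+d_{2}=m-1$ from the order of the pole of the Hilbert series --- and it is correct. The caveat you flag about cones is the right one: for a union of concurrent lines the partials do not minimally generate $J_{f}$, so the displayed resolution degenerates to the Koszul one even though the curve is free; the paper works under the implicit convention excluding this case (compare Lemma \ref{lemF}, which asserts that the exponents are positive), and, as you note, no curve arising in the paper's applications is a cone.
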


We will need additionally the following lemma,  see for instance \cite[Lemma 4.4]{DimcaSernesi}.
\begin{lem}
 \label{lemF}
If $C$ is a free plane curve of degree $m$, then the exponents $(d_{1},d_{2})$ are positive integers satisfying the following system of equations:
$$d_{1}+d_{2} = m-1, \quad \quad d_{1}d_{2} = (m-1)^{2} - \tau(C),$$
where $\tau(C)$ denotes the total Tjurina number.
\end{lem}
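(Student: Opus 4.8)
The plan is to read both identities off the minimal free resolution of the Milnor algebra recorded in Theorem \ref{thmF}; the only extra ingredient is the standard fact that the total Tjurina number equals the constant value of the Hilbert polynomial of $M(f)$.

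The relation $d_1+d_2=m-1$ and the integrality of $d_1,d_2$ are already part of Theorem \ref{thmF}: for a free curve the minimal free resolution of $M(f)=S/J_f$ is
$$0 \to S(-d_1-(m-1))\oplus S(-d_2-(m-1)) \to S^3(-(m-1)) \to S \to M(f)\to 0 ,$$
with $d_1+d_2=m-1$; minimality gives $d_i\geq 0$, and $d_1={\rm mdr}(f)\geq 1$ whenever $C$ is not a cone, i.e.\ not a union of concurrent lines --- which is the case of interest here, since an arrangement containing a smooth conic has $\mathbb{C}$-linearly independent partial derivatives. So the content to be proved is the product formula $d_1d_2=(m-1)^2-\tau(C)$.

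For this, freeness gives $N(f)=I_f/J_f=0$, hence $J_f=I_f$ is saturated and $M(f)=S/J_f$; by Euler's relation $m f=x\partial_x f+y\partial_y f+z\partial_z f$ one has $f\in J_f$, so $M(f)$ coincides with the Tjurina algebra $S/(f,J_f)$, whose support in $\mathbb{P}^{2}_{\mathbb{C}}$ is the zero-dimensional singular subscheme of $C$. Thus $M(f)$ is a one-dimensional graded module and its Hilbert polynomial is the constant $\tau(C)=\sum_{p\in {\rm Sing}(C)}\tau(C,p)$ (see e.g.\ \cite{DimcaSticlaru}). From the resolution,
$$H(M(f),t)=\frac{P(t)}{(1-t)^3}, \qquad P(t)=1-3t^{m-1}+t^{d_1+m-1}+t^{d_2+m-1},$$
so $\tau(C)=\lim_{t\to1}(1-t)H(M(f),t)=\lim_{t\to1}\frac{P(t)}{(1-t)^2}$. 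Finiteness of this limit forces $P(1)=P'(1)=0$; here $P(1)=0$ is automatic and $P'(1)=0$ is precisely $d_1+d_2=m-1$, after which $\tau(C)=\tfrac12 P''(1)$. Differentiating $P$ twice, evaluating at $t=1$ and substituting $d_1+d_2=m-1$ gives, after a short manipulation, $\tfrac12 P''(1)=(m-1)^2-d_1d_2$, i.e.\ $d_1d_2=(m-1)^2-\tau(C)$. (Equivalently, replace each $S(-a)$ by its Hilbert polynomial $\binom{k-a+2}{2}$, form the alternating sum along the resolution, note that the $k^2$- and $k$-coefficients cancel --- once more the relation $d_1+d_2=m-1$ --- and read off the constant term $(m-1)^2-d_1d_2$.)

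The argument is essentially mechanical, and the only step needing a little care is the identification of the constant value of the Hilbert polynomial of $M(f)$ with $\tau(C)$: one must know that for a free (hence reduced) plane curve the subscheme of $\mathbb{P}^{2}_{\mathbb{C}}$ defined by the Jacobian ideal is exactly the Tjurina scheme, of length $\tau(C)$, together with $f\in J_f$. After that, the computation of $P''(1)$ is routine.
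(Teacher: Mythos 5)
Your proof is correct. Note that the paper itself gives no argument for this lemma --- it simply cites \cite[Lemma 4.4]{DimcaSernesi} --- so there is nothing internal to compare against; your Hilbert-series computation (reading $\tau(C)$ off as the constant value of the Hilbert polynomial of $M(f)=S/J_f$, which equals $S/I_f$ by freeness, and extracting $\tfrac12 P''(1)=(m-1)^2-d_1d_2$ from the numerator of the resolution) is exactly the standard derivation behind the cited result, and your arithmetic checks out. One point worth keeping explicit, as you do: the claim that $d_1$ is \emph{positive} fails for a pencil of $m$ concurrent lines, which is free with exponents $(0,m-1)$; your remark that $d_1={\rm mdr}(f)\geq 1$ once $C$ is not a cone over finitely many points is the right way to justify the statement in the setting of this paper, where every arrangement considered contains a smooth conic or is not a pencil.
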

If now $\mathcal{CL}$ is a conic-line arrangement having degree $m = 2k + d$ with $n_{2}$ nodes, $t$ tacnodes, and $n_{3}$ ordinary triple points, then the above lemma can be rewritten as
\begin{equation}
\label{eq:conlinfree}
d_{1} + d_{1} = m-1, \quad \quad d_{1}^{2} + d_{2}^{2} + d_{1}d_{2} = n_{2} + 3t + 4n_{3}, \quad \quad d_{1} \leq d_{2}.
\end{equation}
So our problem reduces to possible geometrical realizations of some positive integer solutions to (\ref{eq:conlinfree}). 
\begin{ex}
\label{ex5.3}
Let $m=3$, so we have one conic and one line. An easy inspection tells us that $d_{1} = d_{2}=1$, $n_{2} = n_{3} = 0$, and $t=1$ satisfies (\ref{eq:conlinfree}). Let us consider
$$\mathcal{CL}_{3} : \quad (x-z)\cdot (x^{2} + y^{2} - z^{2}) = 0.$$
Observe that $\mathcal{CL}_{3}$ is exactly a line tangent to a conic.
We can compute the minimal free resolution of the Milnor algebra $M(f) = S/J_{f}$ which has the following form:
$$0 \rightarrow S^{2}(-3) \rightarrow S^{3}(-2) \rightarrow S \rightarrow M(f) \rightarrow 0,$$
so $\mathcal{CL}_{3}$ is free with the exponents $(1,1)$.
\end{ex}
\begin{ex}
\label{ex5.4}
Consider now the case with $m=4$, and note that there are many solutions to (\ref{eq:conlinfree}). Take $d_{1} = 1$ and $d_{2}=2$, then we can find the following Diophantine solution, namely $t=2$ and $n_{2}=1$.  Consider
$$\mathcal{CL}_{4} : \quad (x^{2}-z^{2})\cdot (x^{2} + y^{2} - z^{2}) = 0,$$
hence two lines tangent to the same conic.
Then the minimal free resolution of the Milnor algebra $M(f) = S/J_{f}$ has the following form:
$$0 \rightarrow S(-5)\oplus S(-4) \rightarrow S^{3}(-3) \rightarrow S \rightarrow M(f) \rightarrow 0,$$
so $\mathcal{CL}_{3}$ is indeed free with the exponents $(1,2)$.
\end{ex}

\begin{ex}
\label{ex5.5}
Consider now the case with $m=5$. One positive integer solution to  (\ref{eq:conlinfree}) is $d_{1}=d_{2}=2$ and $n_{2}=t=3$, $n_3=0$. Consider the following conic-line arrangement 
$$\mathcal{CL}_{5} : (y-z)\cdot(x^{2}-z^{2})\cdot(x^{2}+y^{2}-z^{2})=0.$$
The minimal free resolution of the Milnor algebra $M(f) = S/J_{f}$ has the following form:
$$0 \rightarrow S^{2}(-6) \rightarrow S^{3}(-4) \rightarrow S \rightarrow M(f) \rightarrow 0,$$
so $\mathcal{CL}_{5}$ is free. \\
Another positive integer solution to  (\ref{eq:conlinfree}) is $d_{1}=d_{2}=2$ and $n_{2}=t=0$, $n_3=3$.
Consider now the second case and the following arrangement 
$$\mathcal{CL}_{5}^{'} : y\cdot (x+y-4z)\cdot (x-y+4z)\cdot (x^{2}+y^{2}-16z^{2})=0.$$
Observe that $\mathcal{CL}_{5}'$ has $n_{2} = t=0$ and $n_{3}=3$.
The minimal free resolution of the Milnor algebra $M(f) = S/J_{f}$ has the same form as above, namely
$$0 \rightarrow S^{2}(-6) \rightarrow S^{3}(-4) \rightarrow S \rightarrow M(f) \rightarrow 0,$$
so $\mathcal{CL}_{5}^{'}$ is free.  
\end{ex}
\begin{figure}[ht]
\begin{center}
\begin{minipage}[t]{0.45\textwidth}
\begin{tikzpicture}[line cap=round,line join=round,>=triangle 45,x=1cm,y=1cm,scale=0.65]
\clip(-4.952222222222223,-2.9022222222222234) rectangle (5.252222222222216,3.462222222222221);
\draw [line width=2pt] (0,0) circle (1cm);
\draw [line width=2pt] (1,-2.9022222222222234) -- (1,3.462222222222221);
\draw [line width=2pt,domain=-4.952222222222223:5.252222222222216] plot(\x,{(--1.1547005383792515--0.5773502691896257*\x)/1});
\draw [line width=2pt,domain=-4.952222222222223:5.252222222222216] plot(\x,{(-1.1547005383792515-0.5773502691896257*\x)/1});
\end{tikzpicture}

\end{minipage}
\begin{minipage}[t]{0.45\textwidth}
\begin{tikzpicture}[line cap=round,line join=round,>=triangle 45,x=1cm,y=1cm,scale=0.25]
\clip(-11.608637835919069,-6.777391390325404) rectangle (14.914385982853624,9.519628926891263);
\draw [line width=2pt] (0,0) circle (4cm);
\draw [line width=2pt,domain=-11.608637835919069:14.914385982853624] plot(\x,{(-0-0*\x)/1});
\draw [line width=2pt,domain=-11.608637835919069:14.914385982853624] plot(\x,{(--4-1*\x)/1});
\draw [line width=2pt,domain=-11.608637835919069:14.914385982853624] plot(\x,{(-4-1*\x)/-1});
\end{tikzpicture}
\end{minipage}
\end{center}
\caption{Free conic-line arrangements with $m=5$ and $k=1$.}
\label{free5}
\end{figure}
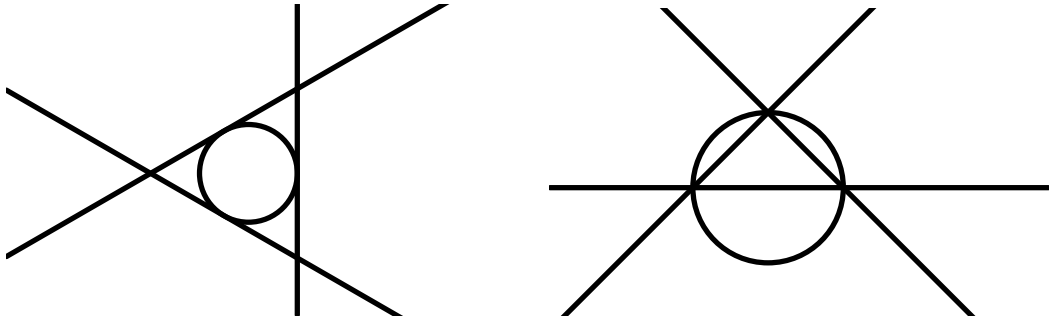 

\section{Classification of free conic-line arrangements with nodes, tacnodes and triple points}

After the above warm-up, we are ready to present the first classification result. It gives a complete characterization of free arrangements having $d \in \{1,2,3\}$ lines and $k \geq 1$ smooth conics. We start with a general discussion about free conic-line arrangements with nodes, tacnodes, and ordinary triple points. \\\\
If $\mathcal{CL}$ is free with the exponents $(d_1,d_2)$, then we must have
$$d_1+d_2=2k+d-1$$
and
$$d_1d_2=(2k+d-1)^2-n_2-3t-4n_3=(2k+d-1)^2-(n_2+ 2t + 3n_3) - t - n_3.$$
Using the combinatorial count (\ref{comb:naiv}), we obtain
\begin{equation}
\label{eq:exp}
d_1d_2 = 2k^2-2k+1 + 2kd +\frac{d^2-3d}{2} - t - n_{3}.   
\end{equation}
If we multiply equation (\ref{eq:exp}) by $2$, add it to (\ref{comb:naiv}), then we obtain
$$2d_1d_2=2k^2+2k(d-1)+\frac{(d-1)(d-4)}{2}+n_2+n_3.$$
Since $d_1d_2 \leq (2k+d-1)^2/4$, we get
\begin{equation}
\label{E7"}
n_2+n_3 \leq \frac{3}{2}(d-1).
\end{equation}
Using once again $(\ref{comb:naiv})$, we obtain
\begin{equation}
\label{E7'}
t \geq k^2 - k + kd +\frac{(d-1)(d-3)}{4}-n_3.
\end{equation}
If $k \geq 2$, two conics $C_1$ and $C_2$ from the arrangement $\mathcal{CL}$ can be in one of the following $3$ situations:
\begin{enumerate}

\item $|C_1 \cap C_2|=4$, and then all the intersection points are nodes. 

\item  $|C_1 \cap C_2|=3$, and then one intersection point produces a tacnode in $\mathcal{CL}$, and the other two intersection points will give nodes.

\item  $|C_1 \cap C_2|=2$, and then two intersection points produce two tacnodes in $\mathcal{CL}$.
\end{enumerate}
Let $m_j$ with $j \in\{2,3,4\}$ be the number of pairs of conics in $\mathcal{CL}$ such that $|C_1 \cap C_2|=j$. Then the number of tacnodes coming from the contact of $2$ conics is 
\begin{equation}
\label{E16a}
t'=2m_2+m_3.
\end{equation}
We also have, by counting pairs of conics in two different ways,
\begin{equation}
\label{E16b}
m_2+m_3+m_4=\binom{k}{2}
\end{equation}
and 
\begin{equation}
\label{E16c}
2m_3+4m_4\leq n_2+n_3.
\end{equation}
The last inequality follows from the fact that the nodes coming from $C_1 \cap C_2$ will give either nodes or triple points in $\mathcal{CL}$. To evaluate the number of tacnodes created by a line in $\mathcal{CL}$, we need the following.
\begin{lem}
\label{lem16}
Let $C_1$ and $C_2$ be two smooth conics in $\mathcal{CL}$ such that 
$|C_1 \cap C_2|=2$. Then any line $L$ in $\mathcal{CL}$ is tangent to at most one of the conics $C_1$ and $C_2$.
\end{lem}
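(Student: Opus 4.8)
The plan is to argue by contradiction: suppose a line $L\in\mathcal{CL}$ is tangent to both $C_1$ and $C_2$, and show that the common tangent then forces $\mathcal{CL}$ to have a singular point that is neither a node, a tacnode, nor an ordinary triple point.

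Write $\{P,Q\}=C_1\cap C_2$. By the trichotomy for pairs of conics recalled just before the lemma, the hypothesis $|C_1\cap C_2|=2$ means that $P$ and $Q$ are both tacnodes of $\mathcal{CL}$; equivalently $C_1$ and $C_2$ are simply tangent at $P$ and at $Q$. I would then put this bitangent pair into normal form: after a projective change of coordinates one may take $C_1:\ xy-z^2=0$ and $C_2:\ xy-\mu z^2=0$ for some $\mu\in\mathbb{C}\setminus\{0,1\}$. One obtains this by first moving $C_1$ to $\{xy=z^2\}$, then using the stabilizer of $C_1$ in $\mathrm{PGL}_3$ to send $\{P,Q\}$ to $\{(1:0:0),(0:1:0)\}$; the requirement that the second conic be tangent to $C_1$ at both of these points then forces it into the pencil $\langle xy,z^2\rangle$, and, being smooth and distinct from $C_1$, it has the stated form.

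The heart of the argument is then a short dual-conic computation in these coordinates: the line $ax+by+cz=0$ is tangent to $C_1$ exactly when $c^2=4ab$, and tangent to $C_2$ exactly when $c^2=4\mu ab$. A line tangent to both therefore satisfies $(1-\mu)ab=0$, hence $ab=0$ and then $c=0$; so the only two common tangents are $\{x=0\}$ and $\{y=0\}$. A direct check shows that $\{x=0\}$ meets each of $C_1$ and $C_2$ only at $Q=(0:1:0)$, and $\{y=0\}$ meets each of them only at $P=(1:0:0)$; in either case $L$ is tangent to $C_1$ and to $C_2$ at one and the same point.

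Finally I would extract the contradiction. Relabelling if necessary, $L=\{y=0\}$ is tangent to both conics at $P$. Then in a neighbourhood of $P$ the curve $\mathcal{CL}$ carries the three smooth local branches $L$, $C_1$, $C_2$, all three having the same tangent line $L$ at $P$; in particular $P$ has multiplicity $\geq 3$ and two of its branches are mutually tangent. Such a point is neither a node nor a tacnode (each of multiplicity $2$), nor an ordinary triple point (three pairwise transverse branches), contradicting the standing hypothesis on $\mathcal{CL}$. The step I expect to be the main obstacle is the normalization of $(C_1,C_2)$ together with the verification that it has no common tangents besides the two obvious ones; a coordinate-free alternative is to pass to the dual plane, where $C_1^{\vee}$ and $C_2^{\vee}$ are smooth conics tangent to each other at the two points dual to the common tangents of $C_1,C_2$ at $P$ and $Q$, so that $C_1^{\vee}\cap C_2^{\vee}$ consists of exactly those two lines — after which the last paragraph applies verbatim.
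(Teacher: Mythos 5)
Your proof is correct and follows essentially the same route as the paper: normalize the bitangent pair of conics, identify the common tangents as the two points of $C_1^{\vee}\cap C_2^{\vee}$, and observe that these lines pass through (indeed are tangent at) the two tangency points, producing a singularity that is not a node, tacnode, or ordinary triple point. The only cosmetic differences are your choice of normal form ($xy-z^2=0$, $xy-\mu z^2=0$ instead of the concentric circles $x^2+y^2-z^2=0$, $x^2+y^2-r^2z^2=0$ taken from Megyesi) and that you justify the normalization and spell out the final local-branch analysis in more detail than the paper does.
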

\proof
Since the pair of conics $C_1$ and $C_2$ gives rise to two tacnodes, then up-to a linear change of coordinates one can take
$$C_1:x^2+y^2-z^2=0 \text{ and } C_2: x^2+y^2-r^2z^2=0$$
where $r \in \C$, $r \ne 0, \pm 1$ -- please consult \cite[Proposition 3]{Meg}.
It follows that the dual $C_1^{\vee}$ is given by $x^2+y^2-z^2=0$, while the dual  $C_2^{\vee}$ is given by $x^2+y^2-\frac{1}{r^{2}}z^2=0$.
A common tangent for $C_1$ and $C_2$ corresponds to a point in the
intersection $C_1^{\vee} \cap C_2^{\vee}$, and hence it is given by equation $T_{\pm}: x\pm y=0$. However, both these lines pass through one of the two tacnodes situated at $(1:\pm 1:0)$. Hence $L$ cannot be tangent to two conics, since the curve $\CC$ has only  nodes, ordinary triple points and tacnodes as singularities.
\endproof

\begin{thm}
\label{thm1}
Let $\mathcal{CL}$ be an arrangement of $0 \leq d \leq 3$ lines and $k \geq 1$ smooth conics with $n_2$ nodes, $t$ tacnodes, and $n_3$ ordinary triple points. Assume that  $\mathcal{CL}$  is free, then the following pairs are admissible: $$(d,k) \in \{(1,1), (2,1), (3,1), (3,2)\}.$$
\end{thm}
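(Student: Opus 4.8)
The plan is to leverage the combinatorial restriction $m \leq 9$ from Proposition \ref{prop:bound9} together with the numerical constraints \eqref{E7"} and \eqref{E7'} on free conic-line arrangements. Since $m = 2k+d$ and $d \leq 3$, freeness forces $2k+d \leq 9$, so $k \leq 4$, leaving only finitely many pairs $(d,k)$ to examine. First I would organize the argument by the value of $d$. For each admissible $d \in \{0,1,2,3\}$, inequality \eqref{E7"} gives $n_2 + n_3 \leq \tfrac{3}{2}(d-1)$, which is especially restrictive for small $d$: when $d = 0$ it yields $n_2 + n_3 \leq -\tfrac32 < 0$, an immediate contradiction, so no free arrangement with $d=0$ exists; when $d = 1$ it gives $n_2 + n_3 \leq 0$, hence $n_2 = n_3 = 0$; when $d=2$ it gives $n_2 + n_3 \leq 1$; and when $d=3$ it gives $n_2+n_3 \leq 3$.

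Next I would rule out the cases with $k$ too large for each fixed $d$. The key tool here is \eqref{E16c} combined with \eqref{E16b}: if $k \geq 2$ then $\binom{k}{2} = m_2 + m_3 + m_4$ and $2m_3 + 4m_4 \leq n_2 + n_3$, so in particular $4m_4 \leq n_2 + n_3$ and, when $m_4 = 0$ is forced, all $\binom{k}{2}$ pairs of conics are of type $m_2$ or $m_3$. For $d = 1$ we have $n_2 = n_3 = 0$, so $2m_3 + 4m_4 \leq 0$, forcing $m_3 = m_4 = 0$ and $m_2 = \binom{k}{2}$; but then Lemma \ref{lem16} says each of the $d=1$ lines is tangent to at most one conic in any pair of conics meeting in two points, and a counting of the tacnodes created, compared against \eqref{E7'}, will force $k = 1$. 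A parallel but slightly more permissive analysis handles $d = 2$ (where $n_2 + n_3 \leq 1$ still keeps $m_4 = 0$ and $m_3 \leq 0$, so again $m_2 = \binom{k}{2}$ and the same tangency bookkeeping forces $k=1$) and $d = 3$ (where $n_2 + n_3 \leq 3$ permits $m_3 \leq 1$ and $m_4 = 0$, so $\binom k2 \leq $ a small number, forcing $k \leq 2$; both $k=1$ and $k=2$ then survive this stage). Throughout, I would use \eqref{eq:exp} and \eqref{eq:conlinfree} to check that the resulting numerical data $(d_1,d_2,n_2,t,n_3)$ actually admits a positive-integer solution, discarding any $(d,k)$ for which the Diophantine system is inconsistent.

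Finally, to confirm that each of the four surviving pairs $(1,1), (2,1), (3,1), (3,2)$ is genuinely \emph{admissible} — i.e. actually realized by a free arrangement and not merely numerically unobstructed — I would exhibit an explicit arrangement in each case: for $(1,1)$ the arrangement $\mathcal{CL}_3$ of Example \ref{ex5.3}, for $(2,1)$ the arrangement $\mathcal{CL}_4$ of Example \ref{ex5.4}, for $(3,1)$ the arrangements $\mathcal{CL}_5$ or $\mathcal{CL}_5'$ of Example \ref{ex5.5}, and for $(3,2)$ a new example (two conics meeting appropriately plus three lines arranged to create only nodes, tacnodes, and triple points) whose Milnor-algebra resolution has the shape dictated by Theorem \ref{thmF} with $d_1 + d_2 = m - 1 = 6$. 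The main obstacle I anticipate is the $d = 3$ case: with $n_2 + n_3 \leq 3$ the constraints are loose enough that several numerical candidates for $(k, n_2, t, n_3)$ survive \eqref{E7"}–\eqref{E16c}, and eliminating the spurious ones (e.g. $k = 3$, which would give $m = 9$ and a potentially consistent Diophantine system) requires the finer geometric input of Lemma \ref{lem16} about common tangents, plus a careful pairwise-intersection count showing that no configuration of three lines and three conics can avoid producing a forbidden singularity while remaining free.
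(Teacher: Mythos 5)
Your overall strategy coincides with the paper's: bound $m$ via Proposition \ref{prop:bound9}, use \eqref{E7"} to control $n_2+n_3$ for each value of $d$, and play the tacnode lower bound \eqref{E7'} against the number of tacnodes that can actually be produced, using \eqref{E16b}, \eqref{E16c} and Lemma \ref{lem16}. Your treatment of $d=0$, $d=1$ and $d=2$ matches the paper's argument and is sound.

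The genuine gap is in the case $d=3$. You claim that $n_2+n_3\leq 3$ gives $m_3\leq 1$ and $m_4=0$, ``so $\binom{k}{2}\leq$ a small number, forcing $k\leq 2$.'' This step fails: inequality \eqref{E16c} bounds only $2m_3+4m_4$ and says nothing about $m_2$, so $\binom{k}{2}=m_2+m_3+m_4$ is not controlled and $k$ is not bounded this way. The only a priori bound is $k\leq 3$ from $2k+3\leq 9$, and excluding $k=3$ requires the supply-versus-demand count that the paper carries out: \eqref{E7'} forces $t\geq 15-n_3\geq 12$, while the conic pairs supply $t'=2m_2+m_3\leq 6$ and, by Lemma \ref{lem16} when $m_3=0$ (or a direct count when $m_3=1$), the three lines can contribute at most $3$, respectively $5$, further tacnodes. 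You do flag at the end that $k=3$ needs ``finer geometric input,'' but this contradicts your earlier claim and the argument is left unexecuted. Likewise, for $(d,k)=(3,2)$ admissibility is not merely a matter of the Diophantine system \eqref{eq:conlinfree} being consistent: the paper must rule out the subcase $m_3=1$ (three lines cannot create the required $t+n_3\geq 8$ extra tacnodes and triple points) and then exhibit and verify the essentially unique realization $\mathcal{CL}_7$, namely two concentric conics inscribed in and circumscribed about an equilateral triangle, with $n_2=0$, $t=5$, $n_3=3$. Your proposal names no candidate configuration here, so the pair $(3,2)$ is not actually shown to be admissible.
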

\begin{proof}
We need to consider some cases.

{\bf Case $d=0$.} Then \eqref{E7"} implies $n_2+n_3 < 0$, which is clearly impossible.

{\bf Case $d=1$.}  Then $n_2+n_3 \leq 0$, and hence $n_2=n_3=0$.
Using the combinatorial count (\ref{comb:naiv}) we get $t=k^2$. The case $k=1$ is clearly possible, i.e., a conic plus a tangent line form a free curve, see Example \ref{ex5.3}. We show now that the case $k > 1$ is impossible. Using \eqref{E16b}, we see that $m_3=m_4=0$, and hence any two conics in $\mathcal{CL}$ meet in two points, as in Lemma \ref{lem16} above.

There are $\binom{k}{2}$ pairs of conics, hence the number of tacnodes obtained as intersection of two conics is $t'=k^2-k$. Recall also that by \cite{DJP} one has $k \in \{2,3,4\}$. The only possibility to have $t=k^2$ tacnodes is that the unique line, call it  $L$, is tangent to all the conics simultaneously. In the light of Lemma \ref{lem16}, $L$ cannot be tangent to each conic in $\mathcal{CL}$. This completes the proof in this case.

{\bf Case $d=2$.} The case $k=1$ is possible, see Example \ref{ex5.4}. We show that the cases $k \geq 2$ are impossible. One has  $n_2+n_3 \leq 1$ and using (\ref{E7'}) above, we get
$$t \geq k^2-k+(2k-\frac{1}{4}-n_3).$$
When $k \geq 2$, one has $2k-\frac{1}{4}-n_3 >2$, and hence
$$t \geq k^2-k+3.$$
Combining the condition $n_{2} + n_{3} \leq 1$ and inequality \eqref{E16c}, we get $m_3 = m_4 =0$. It means, in particular, that $m_{2} = \binom{k}{2}$, $t' = k^{2} - k$, and $k \in \{2,3\}$. An easy inspection, performed along the lines of Lemma \ref{lem16}, shows that a line can add at most one tacnode, and in this case one must have 
$$t \leq k^2-k+2,$$
hence we have a contradiction.

{\bf Case $d=3$.} Observe that by Proposition \ref{prop:bound9} we have $k \in \{1,2,3\}$. The case $k=1$ is possible, see Example \ref{ex5.5}.
 We show now that  $k = 3$ is impossible. In this case one has
$n_2+n_3\leq 3$ and using formula (\ref{E7'}) above, we get
\begin{equation}
\label{E16d}
t \geq k^2-k+(3k-n_3).
\end{equation}
Then \eqref{E16c} and $n_{2} + n_{3} \leq 3$ lead us to $m_3\leq 1$ and $m_4=0$. If $m_3=0$, then we use Lemma \ref{lem16} to get a contradiction since
$3k-n_3 \geq 6 > d=3$. Assume that $m_3=1$, so let's say that $|C_1 \cap C_2|=3$. The maximal number of tacnodes coming from the contact of $3$ lines and conics in such an arrangement is $5$. Indeed, two lines can be tangents to both $C_1$ and $C_2$, giving $4$ tacnodes, and the third line can be tangent to at most one conic in $\mathcal{CL}$.
It follows that
$$t \leq 2m_2+m_3+5=k(k-1)+4.$$
Since $k = 3$ one has $9 - n_3\geq 6 > 4$, and this contradiction proves the claim.

Consider the remaining case $k=2$. First we assume that $m_3=0$. In order the have
$2d-n_3 \leq d$, we must have $n_3=3$ and hence $n_2=0$. However, any line, say $L_1$, is tangent to one of the two conics, say to $C_1$, and it is secant to the other, so $L \cap C_2=\{p,q\}$. The point $p$ cannot be a node, so one of the remaining lines, say $L_2$, passes through $p$ and is tangent to $C_1$, and the other line, say $L_3$, is passing through $q$, and is again tangent to $C_1$. The third triple point should be
$$r= C_2 \cap L_2 \cap L_3.$$
This configuration is geometrically realizable, we can take for instance
$$\mathcal{CL}_{7}: (x^2+y^2-z^2)\cdot(x^2+y^2-4z^2)\cdot(x-z)\bigg(y + \frac{\sqrt{3}}{3}x + \frac{2\sqrt{3}}{3}\bigg)\cdot\bigg(y - \frac{\sqrt{3}}{3}x - \frac{2\sqrt{3}}{3}\bigg)=0.$$
Moreover, it is unique up to a projective transformation. Indeed, we use again \cite[Proposition 3]{Meg} which implies that one may assume that the two conics $C_1$ and $C_2$ are concentric circles, $C_1$  with radius $r_1=1$, and $C_2$  with radius $r_2 \in \C$, $r_2 \ne 0, \pm 1$.
Then we note that  a triangle in which the center of the inscribed circle coincides with the center of the circumscribed circle is necessarily equilateral. This implies that $r_2=2$, and the corresponding equation is given above, where the vertices of the equilateral triangle in the affine plane $z=1$ are $(1,0)$ and $(- \frac{1}{2}, \pm \frac{\sqrt 3}{2})$.
Using \verb{Singular{ we can compute the minimal free resolution of the Milnor algebra of $\mathcal{CL}_{7}$, it has the following form
$$0 \rightarrow S^{2}(-9) \rightarrow S^{3}(-6) \rightarrow S \rightarrow M(f) \rightarrow 0,$$
so $\mathcal{CL}_{7}$ is free.

Finally, we consider the case $k=2$ and $m_3=1$, i.e., the two conics are tangent in one point.  In order to satisfy \eqref{E16d}, we have to use the 3 lines to create many tacnodes and many triple points, in fact we need
$t+n_3 \geq 8$ of such singular points. We can use the first two lines to get two tangents to both conics, hence $4$ new tacnodes. The third line can create either $2$ triple points and $2$ double points, one triple point and $4$ nodes, or a tacnode and $4$ nodes. Neither possibility satisfies $t+n_3 \geq 8$, and this completes the proof.
\end{proof}
\begin{figure}[ht]
\begin{center}
\begin{tikzpicture}[line cap=round,line join=round,>=triangle 45,x=1cm,y=1cm,scale=0.8]
\clip(-4.952222222222223,-2.9022222222222234) rectangle (5.252222222222216,3.462222222222221);
\draw [line width=2pt] (0,0) circle (1cm);
\draw [line width=2pt] (0,0) circle (2cm);
\draw [line width=2pt] (1,-2.9022222222222234) -- (1,3.462222222222221);
\draw [line width=2pt,domain=-4.952222222222223:5.252222222222216] plot(\x,{(--1.1547005383792515--0.5773502691896257*\x)/1});
\draw [line width=2pt,domain=-4.952222222222223:5.252222222222216] plot(\x,{(-1.1547005383792515-0.5773502691896257*\x)/1});
\end{tikzpicture}
\end{center}
\caption{The unique free arrangement of $2$ conics and $3$ lines with $t=5$ and $n_{3}=3$.}
\label{con-lin}
\end{figure}
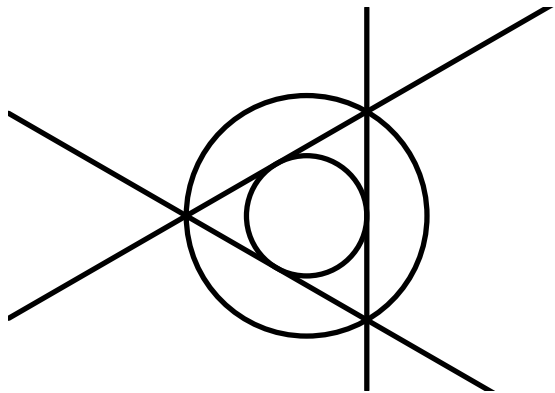
Now we are going to discuss non-freeness for conic-line arrangements with $d \in \{4,5,6,7\}$ -- please bear in mind that the upper bound on the number of lines follows from Proposition \ref{prop:bound9}.
\begin{prop}
\label{prop11a}
Let $\mathcal{CL}$ be an arrangement of $d = 4$ lines and $k \geq 1$ smooth conics having only nodes, tacnodes, and ordinary triple points as singularities. Then $\mathcal{CL}$ is never free.
\end{prop}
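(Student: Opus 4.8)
The plan is to cut down the number $k$ of conics using the numerical machinery already available, reduce to a single sharp case, and then eliminate that case by a hands-on geometric analysis of the tangencies between the four lines and the conic.

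\textbf{Reducing to $k=1$.} First I would invoke Proposition~\ref{prop:bound9}: if $\mathcal{CL}$ is free then $m=\deg C=2k+4\le 9$, so $k\in\{1,2\}$. The case $k=2$ dies at once. There $m=8$, so Proposition~\ref{prop:bound9} forces ${\rm mdr}(f)\ge \tfrac23 m-2=\tfrac{10}{3}$, i.e.\ ${\rm mdr}(f)\ge 4$; but by Theorem~\ref{thmF} freeness gives ${\rm mdr}(f)=d_1\le\tfrac{m-1}{2}=\tfrac72$, a contradiction. Hence $k=1$ and $m=6$. For $m=6$ the same two bounds read $2\le{\rm mdr}(f)\le\tfrac52$, so the exponents must be $(d_1,d_2)=(2,3)$, and then Lemma~\ref{lemF} (equivalently \eqref{eq:conlinfree}) gives $n_2+3t+4n_3=d_1^2+d_2^2+d_1d_2=19$. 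Since the combinatorial count \eqref{comb:naiv} here reads $n_2+2t+3n_3=14$, subtracting yields $t+n_3=5$, and feeding this back gives $n_2=4-n_3$, i.e.\ $n_2+n_3=4$ (consistent with \eqref{E7"}).

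\textbf{Geometric bookkeeping.} Now let $a$ be the number of lines tangent to the conic $C$ and $b=4-a$ the number of secant lines. Because $k=1$, every tacnode is a line--conic tangency and distinct tangent lines produce distinct tacnodes, so $t=a$; with $t=5-n_3$ this forces $a\le 4$ and $n_3=5-a\ge 1$. Two geometric facts will be used repeatedly: a tangent line meets $C$ with intersection multiplicity $2$ at the point of tangency, hence by B\'ezout nowhere else, so no further branch passes through a tacnode; and at an ordinary triple point lying on $C$ both line--branches are transverse to $C$, i.e.\ both lines through it are secant. Distributing the $\binom42=6$ pairs of lines (three absorbed by each triple point formed by three lines, one by each triple point formed by two lines and the conic, one by each node of two lines) and the $2b$ line--conic incidences (two per triple point formed by two lines and the conic, one per node of a line and the conic) over the admissible singularities then pins down, for each $a\in\{1,2,3,4\}$, the number of triple points of each type uniquely.

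\textbf{Finite casework and the main obstacle.} It then remains to rule out each value of $a$. For $a=4$ one is forced to have a triple point formed by three lines tangent to $C$, impossible since at most two tangents to a conic pass through a point. For $a=3$ one is forced either to have two triple points each formed by three of the four lines --- impossible, since two such points would share at least two of the four lines and hence force two lines to coincide --- or to require a triple point formed by two lines and the conic when only one secant line exists. For $a=2$ one needs two triple points each formed by two secant lines and the conic, while only one pair of secant lines is available. For $a=1$ the three forced triple points of type ``two lines and the conic'' make the three secant lines pairwise meet on $C$ in three distinct points, after which the remaining triple point can use neither only secant lines (these are then not concurrent) nor the tangent line (which would pass through a point of $C$ other than its tangency point, contradicting B\'ezout, or create a four--branch singularity). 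Every possibility being eliminated, $\mathcal{CL}$ is not free. The main obstacle is precisely this last step: the numerical constraints alone --- \eqref{comb:naiv}, Lemma~\ref{lemF}, inequality \eqref{E7"} --- leave several Diophantine types $(n_2,t,n_3)$ alive for $k=1$, so the real work is showing none of them is geometrically realizable; the decisive inputs are the ``at most two tangents through a point'' principle and the rigidity of ordinary triple points sitting on the conic, used in the spirit of Lemma~\ref{lem16}, and organizing the tangent/secant bookkeeping so these apply cleanly is the delicate part.
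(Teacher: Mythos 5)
Your argument is correct, but it takes a genuinely different route from the paper's. The most visible difference is how you eliminate $k\ge 2$: the paper first proves $k\le 2$ via the inequalities \eqref{E7'} and \eqref{E16c} together with the graph on the conics and Lemma \ref{lem16}, and then disposes of $k=2$ by a page of casework on $m_2,m_3,m_4$; you kill $k=2$ in one line by observing that for $m=8$ Proposition \ref{prop:bound9} forces ${\rm mdr}(f)\ge \lceil 10/3\rceil=4$ while Theorem \ref{thmF} caps a free curve at ${\rm mdr}(f)\le 3$. This is cleaner and strictly more general (it shows no free curve of degree $8$ with these singularities exists at all), and it is a little surprising the paper does not use it. For $k=1$ the two proofs are of comparable weight but organized differently: the paper works with the inequalities $n_2+n_3\le 4$ and $t\ge 5-n_3$ and splits according to whether the four lines have a point of multiplicity three or form a nodal subarrangement, running over the values of $n_3$; you first pin the exponents to $(2,3)$, hence $\tau(C)=19$ and the exact relations $t+n_3=5$, $n_2=4-n_3$, and then split on the number $a=t$ of tangent lines. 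Both versions rest on the same geometric inputs (at most two tangents to a conic through a point, B\'ezout for a tangent line, and the fact that an ordinary triple point on the conic must involve two secant lines), and your four cases $a\in\{1,2,3,4\}$ cover the same ground as the paper's casework; your exact determination of $\tau$ buys equalities where the paper only has inequalities, at the price of leaning harder on the freeness hypothesis. One small point to tighten: the claim that the incidence bookkeeping ``pins down the number of triple points of each type uniquely'' is stronger than what you need or verify --- your case analysis only uses the upper bounds (at most one triple point formed by three concurrent lines, at most $\binom{4-a}{2}$ formed by two secants and the conic), so state it that way; with that adjustment the proof is complete.
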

\begin{proof}
If we assume that $\CC$ is free, then formula (\ref{E7"}) implies that
\begin{equation}
\label{E11a}
n_2+ n_3 \leq 4
\end{equation}
and by (\ref{E7'}) we obtain 
\begin{equation}
\label{E11b}
t \geq k^2-k+4k + \frac{3}{4} - n_3.
\end{equation}
Using \eqref{E16c}, it follows that
\begin{equation}
\label{E11b1}
2m_3+4m_4 \leq 4.
\end{equation}
In particular, we have $m_3+m_4 \leq 2$. Consider the graph $\Gamma(\CC)$ associated to the conics in $\CC$, whose vertices are these conics, and two vertices $C_i$ and $C_j$ are connected by an edge (resp. by a double edge) if and only if $|C_i \cap C_j|=3$ (resp.
$|C_i \cap C_j|=4$) -- please consult \cite{Meg} for more details. It follows that this graph has at most two edges. To get the maximum number of tacnodes, Lemma \ref{lem16} implies that the $4$ lines have to be tangent lines to the conics connected by an edge to other conic. There are either $3$ conics in a chain connected by edges, or two pairs of conics, each pair connected by an edge.
A simple analysis show that the maximum number of tacnodes created in this way is $8$. Hence the number of tacnodes satisfies
\begin{equation}
\label{E11b1'}
t \leq t'+8=k^2-k-m_3-2m_4+8 \leq k^2-k+8.
\end{equation}
Combining this with inequality \eqref{E11b} we get
$$4k-3 \leq 4k+1-n_3 \leq 8, $$
which implies $k \leq 2$. 
\medskip

{\bf Case: $k=2$.}

\medskip

If $m_2=1$, then any line is tangent to at most one conic, by  Lemma \ref{lem16}, hence we have $t \leq 6$ and \eqref{E11b} yields a contradiction.

If $m_3=1$, the inequality \eqref{E11b} implies $t + n_3 \geq 11$.
We can use $2$ lines, say $L_1$ and $L_2$ to create $4$ new tacnodes ($5$ in total) and a new node ($3$ in total). The third line $L_3$ can produce $2$ triple points if it passes through the $2$ nodes in the intersection $C_1 \cap C_2$, but it will create $2$ new nodes as the intersections with $L_1$ and $L_2$. This stands in contradiction with \eqref{E11a}. If $L_3$ passes through the node $L_1\cap L_2$, then it will be secant to both $C_1$ and $C_2$, thus creating $4$ new nodes, again it stands in contradiction with \eqref{E11a}.
Finally, if $L_3$ is tangent to one of the conics, it would be secant for the other, creating $4$ new nodes, $2$ on that second conic and $2$ on $L_1 \cap L_2$, a contradiction with respect to \eqref{E11a}.

If $m_4=1$, $C_1\cap C_2$ gives rise to $4$ nodes. As soon as we add $2$ lines, a new node will be created (and some old nodes transformed in triple points), but in any case we get a contradiction with \eqref{E11a}.

Hence the case $k=2$ is impossible.

\medskip

{\bf Case: $k=1$.}

\medskip

Since $k=1$, we clearly have $t \leq 4$.
The only possibility to have \eqref{E11b} satisfied is that $n_3 \geq 1$.
If we look on the (sub)arrangement $\mathcal{A}$ of the $4$ lines in $\mathcal{CL}$, we have $2$ cases to discuss.

\medskip

{\bf Case 1: $\mathcal{A}$ has $3$ concurrent lines.}

\medskip

Let $L_1,L_2$ and $L_3$ be the lines meeting at a point $p$, and $L_4$ is a secant line, meeting $L_j$ at $q_j$ for $j=1,2,3$.

If $n_3=1$, we get from \eqref{E11b} that $t \geq 4$, which is impossible. Indeed, only two lines among $L_1,L_2$ and $L_3$ can be tangent to the conic, so we get $t \leq 3$.

If $n_3=2$, the second triple point is at the intersection of a secant line to the conic passing through $p$, say $L_2$, the conic and the line $L_4$. In this case, we get from \eqref{E11b} that $t \geq 3$, which is impossible, since $L_2$ and $L_4$ cannot be tangent to the conic.

If $n_3=3$, the new triple point is at the intersection of a new secant line to the conic passing through $p$, say $L_1$, the conic and the line $L_4$. In this case, we get from \eqref{E11b} that $t \geq 2$, which is impossible, since $L_1$, $L_2$ and $L_4$ cannot be tangent to the conic.

Finally, if $n_3=4$, then the 3 triple points distinct from $p$ are all situated on the conic and on the line $L_4$, which is impossible.

\medskip

{\bf Case 2: $\mathcal{A}$ is a nodal arrangement.}

\medskip

It follows that now all the triple points are on the conic. More precisely, the conic passes through $n_3$ nodes of the line arrangement $\mathcal{A}$. Hence $\mathcal{CL}$ has $n_3$ triple points and at least $6-n_3$ double points. It follows that $n_2+n_3 \geq 6$, a contradiction with \eqref{E11a}.
\end{proof}

\begin{prop}
\label{prop11b}
Let $\mathcal{CL}$ be an arrangement of $d=5$ lines and $k \geq 1$ smooth conics having only nodes, tacnodes, and ordinary triple points. Then $\mathcal{CL}$  is never free.
\end{prop}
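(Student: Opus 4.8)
The plan is to run the same machine used for $d=3$ and $d=4$ in Theorem~\ref{thm1} and Proposition~\ref{prop11a}: first extract the numerics forced by freeness to pin down the exponents and the total Tjurina number, and then eliminate the surviving combinatorial possibilities by elementary projective geometry of conics and lines. By Proposition~\ref{prop:bound9} a free curve of our type has degree $m\le 9$, and here $m=2k+5$, so $k\in\{1,2\}$. If $k=1$ then $m=7$: the bound $\mathrm{mdr}(f)\ge \frac{2}{3}m-2$ gives $d_1\ge 3$ while $d_1+d_2=6$, $d_1\le d_2$ give $d_1\le 3$, hence $(d_1,d_2)=(3,3)$ and $\tau(C)=n_2+3t+4n_3=27$. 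If $k=2$ then $m=9$ and the same reasoning yields $(d_1,d_2)=(4,4)$ and $\tau(C)=n_2+3t+4n_3=48$. In either case, combining this with the identity \eqref{E4} gives $t+n_3=7$ (for $k=1$) or $t+n_3=14$ (for $k=2$) and, crucially, $n_2+n_3=6$, i.e. equality in \eqref{E7"}; in particular $n_3\le 6$, and since there are only $5$ lines we get $n_3\ge 2$ when $k=1$ and $t\ge 8$ when $k=2$.

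Case $k=1$. Every tacnode comes from a line tangent to the unique conic $C$, and no line may pass through the contact point of a tangent line (this would create a singularity with two tangent branches, which is neither a node, a tacnode, nor an ordinary triple point); hence a tangent line contributes exactly one tacnode and nothing more, $t$ equals the number of tangent lines, and the number of secant lines is $s=5-t=n_3-2$. Two bounds then control $n_3$. First, a triple point lying on $C$ must be of the form $\{C,L_i,L_j\}$ with $L_i,L_j$ secant and $L_i\cap L_j\in C$, and to attain the maximal number of these the secant lines must be inscribed in $C$, so $s$ secant lines give at most $s$ triple points on $C$ (at most $\binom{s}{2}$ if $s\le 2$). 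Second, a triple point off $C$ is a point where three of the five lines meet, and an arrangement of five lines with no point of multiplicity $\ge 4$ carries at most two such points (dually, five points with no four collinear lie on at most two lines each containing three of them). For $n_3\in\{2,3,4,5\}$ these two bounds already force $n_3\le 4$, a contradiction. The only value left is $n_3=6$, which forces $s=4$, $t=1$, $n_2=0$; then the four secant lines must form a quadrilateral inscribed in $C$ (the only way to produce four triple points on $C$), the two remaining triple points must be the intersections of the two pairs of opposite sides, and — because $n_2=0$ — the unique tangent line has to pass through both of these two points. But the line joining these two diagonal points of a quadrilateral inscribed in a conic is the polar of the intersection of the quadrilateral's own diagonals, a point which cannot lie on the conic; hence that line is not tangent to $C$, a contradiction. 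This finishes $k=1$.

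Case $k=2$. Now $m_2+m_3+m_4=1$ and $2m_3+4m_4\le n_2+n_3=6$, so $C_1$ and $C_2$ meet in exactly one of the ways $|C_1\cap C_2|\in\{2,3,4\}$. If $|C_1\cap C_2|=2$ then by Lemma~\ref{lem16} every line is tangent to at most one conic, so $t\le 2+5=7<8$, impossible. In the two remaining cases the number of lines tangent to both conics is bounded by the number of honest common tangents — two when $|C_1\cap C_2|=3$ (the common tangent at the tacnode cannot be used) and four when $|C_1\cap C_2|=4$ — and a short count then gives $t\le 8$, so we must be in the extremal case $t=8$, $n_3=6$, $n_2=0$. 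The equality $n_2=0$ is what kills these cases: when $|C_1\cap C_2|=3$, a line tangent to both conics meets each of them only at its contact point, so those contact points cannot carry another line, and then all four of its intersections with the remaining lines would have to be triple points of the five-line subarrangement, of which there are at most two; when $|C_1\cap C_2|=4$, making all four points of $C_1\cap C_2$ into triple points requires lines through them which are secant to both conics, hence tangent to neither, so they add no tacnodes and $t$ falls below $8$ (and in fact the four common tangents already force more than $6$ nodes-or-triple-points). In every branch we reach a contradiction, so no arrangement with $d=5$ is free.

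The step I expect to be the genuine obstacle, in both $k=1$ and $k=2$, is the single extremal configuration $n_3=6$, $n_2=0$, $t$ maximal: there all the numerical inequalities are tight, so the contradiction cannot come from counting alone, and one really needs the projective geometry of conics — that a tangent line meets its conic only at the point of contact, and that the diagonal triangle of a quadrilateral inscribed in a conic is self-polar.
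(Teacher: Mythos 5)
Your overall strategy matches the paper's (bound $k$ via Proposition \ref{prop:bound9}, extract $n_2+n_3\le 6$ and a lower bound on $t$ from freeness, then eliminate cases by the geometry of tangent and secant lines), your numerics are correct and in fact sharper — you get the equality $n_2+n_3=6$ where the paper only uses the inequality — and your endgame for $k=1$, $n_3=6$ via the self-polar diagonal triangle of an inscribed quadrangle is a correct and genuinely different (arguably cleaner) argument than the paper's pencil-of-conics computation. However, the $k=1$ case has a genuine gap: the claim that your two bounds eliminate $n_3\in\{2,3,4,5\}$ fails for $n_3=2$ and $n_3=5$. For $n_3=2$ one has $s=0$, so the bounds give (triple points on $C$) $\le 0$ and (triple points off $C$) $\le 2$, i.e. $n_3\le 2$, which is perfectly consistent with $n_3=2$; to kill this case you need the further observation that three tangent lines to a smooth conic are never concurrent (three points of the dual conic are never collinear), so with $s=0$ there are no triple points at all. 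For $n_3=5$ one has $s=3$ and the bounds give $n_3\le 3+2=5$, again consistent; here you must add that when the three secants form an inscribed triangle, a triple point off $C$ cannot contain two secants (their intersection lies on $C$), hence must contain both tangent lines, and there is at most one such point, giving $n_3\le 4$.

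There is a smaller imprecision in the $k=2$ branch with $|C_1\cap C_2|=3$: the four intersections of a bitangent line $L_1$ with the remaining four lines need not be four distinct points, so "four triple points needed versus two available" is not yet a contradiction — they can collapse into exactly two triple points of the line subarrangement, both lying on $L_1$. You need a second step, e.g. running the same argument on the other bitangent line, whose intersections with the three singly-tangent lines cannot all land on those two points; or, more directly, observe that since $n_2=0$ each of the two transverse points of $C_1\cap C_2$ must carry a line, such a line is secant to both conics and contributes no tacnode, and this already forces $t\le 7<8$. With these repairs the proof closes, but as written the subcases $n_3=2$ and $n_3=5$ for $k=1$, and the bitangent count for $k=2$ with one conic-conic tangency, are not actually established.
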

\begin{proof}
If we assume that $\mathcal{CL}$ is free. Using Proposition \ref{prop:bound9}, we get $m = 2k+5 \leq 9$, and hence $k\leq 2$.
Then formula (\ref{E7"}) implies that
\begin{equation}
\label{E11a2}
n_2+ n_3 \leq 6
\end{equation}
and by (\ref{E7'}) we get
\begin{equation}
\label{E11b2}
t \geq k^2-k+5k+2-n_3.
\end{equation}

\medskip

{\bf Case $k=2$.}

\medskip

If $m_2=1$, then any line is tangent to at most one conic, which follows from Lemma \ref{lem16}, so we have $t \leq 7$ and \eqref{E11b2} yields a contradiction.

If $m_3=1$, the inequality \eqref{E11b2} implies $t + n_3 \geq 14$.
We can use $2$ lines, say $L_1$ and $L_2$, to create $4$ new tacnodes ($5$ in total) and a new node ($3$ in total). The third line $L_3$ can produce $2$ triple points if it passes through the $2$ nodes in the intersection $C_1 \cap C_2$, but will create $2$ new nodes as intersections with $L_1$ and $L_2$. It means that the remaining $2$ lines should not create any new node or triple point, which is impossible.  If $L_3$ passes through the node $L_1\cap L_2$, then it will be secant to both $C_1$ and $C_2$, thus creating 
$4$ new nodes, a contradiction with \eqref{E11a}.
Finally, if $L_3$ is tangent to one of the conics, it would be secant for the other, creating $4$ new nodes, $2$ on that second conic and $2$ on $L_1 \cap L_2$. This contradicts \eqref{E11a}.

If $m_4=1$, $C_1\cap C_2$ gives rise to 4 nodes. As soon as we add $3$ lines, at least $3$ new nodes will be created (and some old nodes transformed in triple points), but in any case we get a contradiction with \eqref{E11a}.

Hence the case $k=2$ is impossible.
\medskip

{\bf Case: $k=1$.}

\medskip
The above shows that the only possibility to have \eqref{E11a2} and \eqref{E11b2} satisfied is that $k=1$ and $n_3 \geq 2$.
If we look on the (sub)arrangement $\mathcal{A}$ of the $5$ lines in $\mathcal{CL}$, there are $3$ cases to discuss.

\medskip

{\bf Case 1: $\mathcal{A}$ has $2$ triple points.}

\medskip

If we denote by $p_1$ and $p_2$ the two triple points, then the line determined by these points must be in $\mathcal{A}$. We denote this line by $L$. The other $2$ lines passing through $p_1$ ($p_2$, respectively) are denoted by $L_1$ and $L_1'$ ($L_2$ and $L_2'$, respectively). There are $4$ nodes in the arrangement $\mathcal{A}$, the intersections $L_1 \cap L_2$, $L_1 \cap L_2'$,
$L_1' \cap L_2$, and $L_1' \cap L_2'$.

If $n_3=2$, we get from \eqref{E11b2} that $t \geq 5$, which is impossible since $L$, $L_1$ and $L_1'$ cannot be all tangent to the conic.

If $n_3=3$, the new triple point $q_1$ is at the intersection of two secant lines with the conic. In this case, we get from \eqref{E11b} that $t \geq 4$, which is impossible, since the $2$ lines meeting at $q_1$ cannot be tangent to the conic.

If $n_3=4$, then there are $2$ triple points $q_1$ and $q_2$ situated on the conic, coming from the intersection of at least $3$ lines from $\mathcal{A}$ with the conic. In this case, we get from \eqref{E11b} that $t \geq 3$, which is impossible since the lines passing through $q_1$ or $q_2$ cannot be tangent to the conic.

If $n_3=5$, then there are $3$ triple points $q_1$, $q_2$ and $q_3$ situated on the conic, coming from the intersection of at least $4$ lines from $\mathcal{A}$ with the conic. In this case, we get from \eqref{E11b} that $t \geq 2$, which is impossible since the lines passing through $q_1$, $q_2$ or $q_3$ cannot be tangent to the conic.

Finally, if $n_3=6$, then $n_2=0$, the conic, call it $Q$, passes through all the $4$ nodes of $\mathcal{A}$ and it is tangent to the line $L$. The conics passing through the $4$ nodes form a pencil of conics, determined by the degenerate conics $Q_1$ and $Q_2$, where $Q_1$ (resp. $Q_2$) is  the union $L_1 \cup L_1'$ (resp. $L_2 \cup L_2'$).
Note that these two degenerate conics $Q_1$ and $Q_2$ meet the line $L$ in one point, namely $Q_1\cap L=p_1$ and $Q_2\cap L=p_2$.
The conic $Q$ is in this pencil $\al Q_1 + \be Q_2$, and meets
the line $L$ in one point as well. This is a contradiction, since in a pencil only two members can meet a given line in a single point. To check this claim, we assume that $L$ is given by $x=0$. Then the condition that $\al Q_1 + \be Q_2$ meets
line $L$ in one point is a quadratic form in $y,z$, with coefficients being linear forms in $\al, \be$, such that it has zero discriminant, which yields the vanishing of the quadratic form in $\al, \be$.

\medskip

{\bf Case 2: $\mathcal{A}$  has a triple point.}

\medskip

It follows that $\mathcal{A}$ has $7$ nodes. To create $n_3$ triple points in $\mathcal{CL}$, the conic passes through $n_3-1$ nodes of the line arrangement $\A$. Hence $\mathcal{CL}$ has $n_3$ triple points and at least $7-(n_3-1)$ double points. It follows that $n_2+n_3 \geq 8$, a contradiction with respect to \eqref{E11a2}.

\medskip

{\bf Case 3: $\mathcal{A}$ is a nodal arrangement.}

\medskip

It follows that now  $\mathcal{A}$ has $10$ nodes. To create $n_3$ triple points in $\mathcal{CL}$, the conic passes through $n_3$ nodes of the line arrangement $\mathcal{A}$. Hence $\mathcal{CL}$ has $n_3$ triple points and at least $10-n_3$ double points. It follows that $n_2+n_3 \geq 10$, a contradiction with \eqref{E11a2}.

\end{proof}

\begin{prop}
\label{prop11c}
Let $\mathcal{CL}$ be an arrangement of $d = 6$ lines and $k \geq 1$ smooth conics having only nodes, tacnodes, and ordinary triple points. Then  $\mathcal{CL}$ is never free.
\end{prop}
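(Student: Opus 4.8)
The plan is to use the degree bound of Proposition~\ref{prop:bound9} to pin the degree down completely, and then to reach a contradiction purely at the level of numerical invariants, so that — in contrast with the cases $d \leq 5$ — no analysis of the line subarrangement or of the mutual position of the conics is needed.

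Assume that $\mathcal{CL}$ is free. Since all its singularities are nodes, tacnodes, or ordinary triple points, Proposition~\ref{prop:bound9} applies and gives $m = {\rm deg}\, C = d + 2k = 6 + 2k \leq 9$; as $k \geq 1$, this forces $k = 1$ and $m = 8$. Now I would combine the two available estimates on ${\rm mdr}(f)$. By Theorem~\ref{thmF}, writing the exponents as $(d_1, d_2)$ with $d_1 \leq d_2$, one has $d_1 + d_2 = m - 1 = 7$ and ${\rm mdr}(f) = d_1 \leq \frac{m-1}{2} = \frac{7}{2}$, hence $d_1 \leq 3$. On the other hand, Proposition~\ref{prop:bound9} gives ${\rm mdr}(f) \geq \frac{2}{3} m - 2 = \frac{10}{3}$, and since ${\rm mdr}(f)$ is an integer this yields ${\rm mdr}(f) \geq 4$. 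The relations $4 \leq {\rm mdr}(f) = d_1 \leq 3$ are contradictory, so $\mathcal{CL}$ is not free.

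The only delicate point is the integrality step: over $\mathbb{R}$ the bounds $\frac{10}{3} \leq {\rm mdr}(f) \leq \frac{7}{2}$ are perfectly consistent, and it is precisely because ${\rm mdr}(f) \in \mathbb{Z}$ that the argument closes; there is no genuine geometric obstacle to work through. If one preferred to stay within the bookkeeping used in Propositions~\ref{prop11a} and~\ref{prop11b}, one could instead substitute $k = 1$, $m = 8$ into \eqref{E7"} and \eqref{E7'} to obtain $n_2 + n_3 \leq 7$ and $t + n_3 \geq 10$, use $t \leq d = 6$ (valid since $k = 1$, so the only tacnodes come from lines tangent to the single conic), and then run through the possibilities for the $6$-line subarrangement $\mathcal{A}$ and the position of the conic; but this route is strictly longer and terminates at the same contradiction.
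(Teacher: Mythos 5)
Your argument is correct, and it is genuinely different from (and shorter than) the paper's proof. The paper also begins by using Proposition~\ref{prop:bound9} to force $k=1$, but then proceeds geometrically: it derives $n_2+n_3\leq 7$ and $t\geq 10-n_3$ from \eqref{E7"} and \eqref{E7'}, uses $t\leq 6$ to force $n_3\geq 4$, identifies the $6$-line subarrangement as the unique one with $n_3'=4$ (projectively $(x^2-y^2)(x^2-z^2)(y^2-z^2)=0$), and finally rules out the required conic because three concurrent lines cannot all be tangent to one smooth conic. You instead exploit the integrality gap at $m=8$: the lower bound ${\rm mdr}(f)\geq \tfrac{2}{3}\cdot 8-2=\tfrac{10}{3}$ forces ${\rm mdr}(f)\geq 4$, while freeness forces $d_1={\rm mdr}(f)\leq \lfloor 7/2\rfloor =3$, a contradiction. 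Both bounds are exactly as stated in Proposition~\ref{prop:bound9} and Theorem~\ref{thmF} (and the lower bound only improves if triple points or tacnodes are absent, since $\alpha_C\geq 2/3$ always), so the argument is airtight. What your route buys is considerable: it shows that \emph{no} reduced free plane curve of degree $8$ with only nodes, tacnodes, and ordinary triple points exists, so it dispatches not only $d=6$, $k=1$ but also the subcase $d=4$, $k=2$ of Proposition~\ref{prop11a} and the pure-conic cases of degree $8$, with no case analysis at all. What it does not buy is any information for $m=7$ or $m=9$, where $\lceil 2m/3\rceil-2=\lfloor (m-1)/2\rfloor$ and the numerical squeeze closes without contradiction; there the paper's geometric bookkeeping remains necessary. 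Your closing sketch of the alternative combinatorial route is consistent with the paper's actual proof, though it would still require the classification step for $6$-line arrangements with $n_3'\geq 4$ to terminate.
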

\proof
If we assume that $\mathcal{CL}$ is free, then using Proposition \ref{prop:bound9} we get $m = 2k + 6 \leq 9$, and hence $k=1$.
Then the formula (\ref{E7"}) implies that
\begin{equation}
\label{E11a3}
n_2+ n_3 \leq 7
\end{equation}
and by (\ref{E7'}) we have
\begin{equation}
\label{E11b3}
t \geq 10-n_3.
\end{equation}
Since $t \leq 6$, the only possibility to have \eqref{E11a3} and \eqref{E11b3} been satisfied is $n_3 \geq 4$.
Consider the (sub)arrangement $\mathcal{A}$ of $6$ lines in $\mathcal{CL}$. Then $\mathcal{A}$ has only double and triple points, and let us denote by $n_2'$ and $n_3'$ their respective numbers. It is known that 
\begin{equation}
\label{E11d3}
n_2'+3n_3' =\binom{6}{2}=15.
\end{equation}
The conic in $\mathcal{CL}$ has to pass through $n_3-n_3'$ nodes of $\mathcal{A}$, and hence $n_2 \geq n_2'-(n_3-n_3')$. It follows that
$$n_2+n_3 \geq n_2'+n_3' =15 - 2n_3'.$$
It is well-known that the maximal number of triple points $n_3'$ in this case is $4$, and the arrangement is projectively equivalent to
$$\A_0: (x^2-y^2)(x^2-z^2)(y^2-z^2)=0.$$
Combining this fact with inequality in \eqref{E11a3}, it follows that
we are exactly in this case, that is $n_2' = 3$ and $n_3' = 4$. Moreover, the intersection between the conic and the lines in $\mathcal{A}$ should not add any new point to the set of seven multiple points of $\mathcal{A}$. Note that each line contains exactly one double point. This would imply that the conic is tangent to the 6 lines (in new points given rise to tacnodes), but this is clearly impossible as we have seen above ($3$ concurrent lines cannot all be tangent to the same conic).
\endproof

\begin{prop}
\label{prop11d}
Let $\mathcal{CL}$ be an arrangement of $d = 7$ lines and $k \geq 1$ smooth conics having only nodes, tacnodes, and ordinary triple points as singularities. Then $\mathcal{CL}$ cannot free.
\end{prop}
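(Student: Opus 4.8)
The plan is to follow the strategy of Propositions \ref{prop11a}--\ref{prop11c}, the new feature being that the line subarrangement is now forced into an essentially unique position, after which only two possibilities for the conic survive, and both are eliminated -- one by a duality argument, one by a short explicit computation.

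First I would fix the numerology. By Proposition \ref{prop:bound9}, freeness forces $m=2k+7\le 9$, hence $k=1$ and $m=9$; and since $\mathrm{mdr}(f)\ge \frac23 m-2=4$ while $\mathrm{mdr}(f)=d_1\le\frac{m-1}{2}=4$ by Theorem \ref{thmF}, the exponents must be $(d_1,d_2)=(4,4)$. Lemma \ref{lemF} then gives $\tau(C)=n_2+3t+4n_3=48$, and together with the combinatorial count $n_2+2t+3n_3=\binom92-1=35$ (see \eqref{E4}) this yields $t+n_3=13$ and $n_2+n_3=9$. Since $k=1$, every tacnode is a tangency between a line and the single conic $Q$, so $t\le 7$; hence $n_3\ge 6$ and $n_2\le 3$.

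Next I would study the subarrangement $\mathcal{A}$ of the seven lines, which carries only $n_2'$ nodes and $n_3'$ triple points with $n_2'+3n_3'=\binom72=21$. As in Proposition \ref{prop11c}, every triple point of $\mathcal{CL}$ is either a triple point of $\mathcal{A}$ or arises from $Q$ passing through a node of $\mathcal{A}$, so $n_2+n_3\ge n_2'+n_3'=21-2n_3'$; with $n_2+n_3=9$ this forces $n_3'\ge 6$, and since seven lines cannot have seven triple points (a $(7_3)$ configuration is the Fano plane, not realizable over $\mathbb{C}$) we get $n_3'=6$ and $n_2'=3$. I would then show that $\mathcal{A}$ is projectively unique: removing any line drops the triple-point count to that of a six-line arrangement, which is at most $4$, so each line of $\mathcal{A}$ contains at least two triple points; a short count ($\sum_{\text{lines}}(\#\text{triples on it})=18$, each line satisfying $a+2b=6$ with $b\ge2$) forces exactly four lines containing three triple points and zero nodes, and three lines containing two triple points and two nodes; the latter three lines form a triangle whose vertices are the three nodes, the former four lines are a complete quadrilateral whose six vertices are the six triple points, and the three triangle lines are precisely its three diagonals. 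Concretely I would take $N_1\colon x-z=0$, $N_2\colon x+z=0$, $N_3\colon y-z=0$, $N_4\colon y+z=0$ and the diagonals $S_1\colon x-y=0$, $S_2\colon x+y=0$, $S_3\colon z=0$, with the three nodes $(0:0:1)$, $(1:1:0)$, $(1:-1:0)$.

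Finally I would pin down $Q$. Since $n_2+n_3=n_2'+n_3'$ the conic creates no new node, so every point of $Q\cap\mathcal{A}$ is a tangency or one of the three nodes of $\mathcal{A}$; in particular $Q$ must be tangent to each of $N_1,\dots,N_4$ (they carry no node of $\mathcal{A}$), and for each diagonal $S_i$ either $Q$ is tangent to $S_i$ or $Q$ passes through both nodes lying on $S_i$. Passing through both nodes on two diagonals puts $Q$ through all three nodes, so the number of diagonals of the second kind is $0$, $1$, or $3$; the value $1$ is excluded numerically ($Q$ would then meet exactly two nodes, forcing $n_3=8$, $t=5$, yet $Q$ would be tangent to six lines). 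If that number is $0$, then $Q$ is tangent to all seven lines, so the dual smooth conic $Q^\vee$ contains the seven dual points; but these contain a collinear triple (dual to any triple point of $\mathcal{A}$), contradicting the fact that a smooth conic has no three collinear points. If that number is $3$, then $Q$ passes through $(0:0:1)$, $(1:1:0)$, $(1:-1:0)$ and is tangent to $N_1,\dots,N_4$; the three point conditions force $Q=A(x^2-y^2)+Exz+Fyz$ with $A\ne 0$, tangency to $x-z=0$ and $x+z=0$ then forces $E=0$ and $F^2=-4A^2$, and tangency to $y-z=0$ forces $F=A$, which is impossible. Hence $\mathcal{CL}$ is never free. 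The main obstacle is the structural step identifying $\mathcal{A}$ and reading off exactly which lines $Q$ is compelled to touch; once that is settled the two remaining cases are quick.
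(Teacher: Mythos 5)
Your argument is correct and follows the same overall skeleton as the paper's proof (reduce to $k=1$, force $n_3'\ge 6$ on the line subarrangement, identify that subarrangement, then show no admissible conic exists), but two key steps are executed genuinely differently, and in both places your version is more self-contained or more complete. First, you pin down the exponents as exactly $(4,4)$, which upgrades the paper's inequalities \eqref{E7"} and \eqref{E7'} to the equalities $n_2+n_3=9$ and $t+n_3=13$; the paper only uses $n_2+n_3\le 9$ and $t+n_3\ge 13$. Second, you identify the seven-line subarrangement by a direct incidence count (each line carries at least two triple points because six lines admit at most four triple points, whence the $4+3$ split into a complete quadrilateral and its diagonal triangle), whereas the paper invokes the classification of line arrangements with small ${\rm mdr}$ from \cite{BT} together with a Tjurina-number computation; your route avoids the external classification at the cost of a slightly longer combinatorial argument. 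Third, and most substantively, your endgame is more careful than the paper's: since the conic may a priori be tangent to a diagonal line instead of passing through its two nodes, there are three cases (the number of non-tangent diagonals is $0$, $1$, or $3$), and you eliminate all of them --- the case of tangency to all seven lines by dualizing (three concurrent lines give three collinear points on the smooth dual conic, which is impossible) and the case of three non-tangent diagonals by the explicit discriminant computation. The paper passes directly to the last case without discussing the other two, so your proof actually closes a small gap in the published argument. The only points worth stating explicitly in a final write-up are that the conic must avoid the six triple points of $\mathcal{A}$ (a multiplicity-four point would be forbidden) and that the equality $n_2+n_3=n_2'+n_3'$ is what forbids any transversal intersection of the conic with a line at a smooth point of $\mathcal{A}$; both are implicit in what you wrote and both are easy to justify.
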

\begin{proof}
If we assume that $\mathcal{CL}$ is free, then using Proposition \ref{prop:bound9} we get $m=2k+7 \leq 9$, and hence $k=1$.
Then the formula (\ref{E7"}) implies that
\begin{equation}
\label{E11a4}
n_2+ n_3 \leq 9
\end{equation}
and the formula (\ref{E7'}) gives
\begin{equation}
\label{E11b4}
t \geq 13-n_3.
\end{equation}
Since $t \leq 7$, the only possibility to have \eqref{E11a4} and \eqref{E11b4} been satisfied is $n_3 \geq 6$.
Consider the (sub)arrangement $\mathcal{A}$ of $7$ lines in $\mathcal{CL}$. Then $\mathcal{A}$ has only double and triple points, and let us denote by $n_2'$ and $n_3'$ their respective numbers. By the combinatorial count, we have
\begin{equation}
\label{E11d4}
n_2' + 3n_3' = \binom{7}{2} = 21.
\end{equation}
The conic in $\mathcal{CL}$ has to pass through $n_3-n_3'$ nodes of $\mathcal{A}$, and hence $n_2 \geq n_2'-(n_3-n_3')$. It follows that
$$n_2+n_3 \geq n_2'+n_3' =21-2n_3'.$$
Hence $n_3' \geq 6$. Using the classification of line arrangements with ${\rm mdr}(f) \leq 2$ presented in \cite{BT}, it follows that our arrangement $\mathcal{A}$ satisfies ${\rm mdr}(f) \geq 3$. A calculation of the total Tjurina number, under the assumption that $n_3' \geq 6$, shows us that $n_3' = 6$, $n_2' = 3$, ${\rm mdr}(f)=3$, and the arrangement $\mathcal{A}$ has to be free.
It follows that the arrangement $\mathcal{A}$ is projectively equivalent to
$$\A_0: xyz(x+y)(x+z)(y-z)(x+y+z)=0,$$
see \cite[Theorem 2.6]{BT}, where this arrangement occurs as \textbf{IIIc}. Moreover, exactly as in the previous proof, the intersections between the conic and the lines in $\mathcal{A}$ should not add any new point to the set of seven multiple points of $\A$. Note that $4$ of the $7$ lines, denoted here by $L_2,L_3,L_4$, and $L_5$, where $L_j$ means the line given by the $j$-th factor in the equation of $\mathcal{A}$, contain each $3$ triple points, while the remaining $3$ lines, $L_1, L_6$ and $L_7$, contain each $2$ triple points and $2$ double points. 
This implies that the conic is tangent to $4$ lines $L_2,L_3,L_4$, and $L_5$ (in new points given rise to tacnodes), and passes through the remaining $3$ double points, located at 
$$(0:1:1), \ (0:-1:1) \text{ and } (-2:1:1). $$
A direct computation shows that such a conic does not exist.
\end{proof}

In conclusion, we can state the following complete classification result.
\begin{thm}
\label{thm11}
Let $\mathcal{CL}$ be an arrangement of $d \geq 1$ lines and $k \geq 1$ smooth conics having only nodes, tacnodes, and  ordinary triple points as singularities. Then  $\mathcal{CL}$ is free if and only if one of the following cases occur:
In each case we list the numbers $n_2$, $t$, and $n_3$ of nodes, tacnodes, and ordinary triple points, respectively.
\begin{enumerate}

\item $d=k=1$ and $\mathcal{CL}$ consists of a smooth conic and a tangent line.
In this case, $n_2=n_3=0$, $t=1$.

\item $d=2$, $k=1$ and $\mathcal{CL}$ consists of a smooth conic and two tangent lines. In this case $n_2=1$, $n_3=0$, $t=2$.

\item $d=3$, $k=1$ and either $\mathcal{CL}$ is a smooth conic inscribed in a triangle, or $\mathcal{CL}$ is a smooth conic circumscribed in a triangle.
In the first case we have $n_2=3$, $n_3=0$, $t=3$, and in the second case we have $n_2=t=0$, $n_3=3$. 

\item $d=3$, $k=2$ and $\mathcal{CL}$ consists of a triangle $\Delta$, a smooth conic inscribed in $\Delta$, and another smooth conic circumscribed in $\Delta$. In this case, $n_2=0$, $n_3=3$, $t=5$.

\end{enumerate}
In particular, a free conic-line arrangement having only nodes, tacnodes, and ordinary triple points is determined up to a projective equivalence by the numerical data $n_2$, $n_3$ and $t$.
\end{thm}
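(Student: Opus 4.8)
The plan is to bolt together the obstructions and the constructions already established. By Proposition~\ref{prop:bound9} a free reduced curve with only nodes, tacnodes, and ordinary triple points has degree $m\le 9$; since $m=2k+d$ with $k\ge 1$ this forces $d\le 7$, and Propositions~\ref{prop11a}, \ref{prop11b}, \ref{prop11c}, \ref{prop11d} eliminate $d\in\{4,5,6,7\}$. Theorem~\ref{thm1} then cuts the list of pairs down to $(d,k)\in\{(1,1),(2,1),(3,1),(3,2)\}$, so all that remains is, for each of these four pairs, to enumerate the free arrangements and determine their invariants $(n_2,t,n_3)$.

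For a fixed pair $(d,k)$ I would first extract the admissible data from \eqref{eq:conlinfree}: the sum $d_1+d_2=m-1$ is fixed, the second equation reads $n_2+3t+4n_3=d_1^2+d_2^2+d_1d_2$, and subtracting the combinatorial count in the form \eqref{E4} yields the exact value of $t+n_3$; feeding this back into \eqref{E4} leaves a one-parameter family of triples $(n_2,t,n_3)$, which is trimmed by nonnegativity, by \eqref{E7"} and \eqref{E7'}, by the tacnode bookkeeping \eqref{E16a}--\eqref{E16c} when $k\ge 2$, and by two elementary facts: a line meets a smooth conic tangentially in at most one point (so $t\le d$ when $k=1$), and three concurrent lines cannot all be tangent to a smooth conic. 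This pins down three of the four pairs up to realizability: $(1,1)$ leaves only $(n_2,t,n_3)=(0,1,0)$, realized by Example~\ref{ex5.3}; $(2,1)$ leaves only $(1,2,0)$, realized by Example~\ref{ex5.4}; and $(3,2)$ leaves only $(0,5,3)$, realized by the arrangement $\mathcal{CL}_{7}$ built in the proof of Theorem~\ref{thm1}. For $(3,1)$ the exponents are forced to be $(2,2)$ and one is left with the candidates having $t+n_3=3$, namely $(3,3,0)$, $(2,2,1)$, $(1,1,2)$, $(0,0,3)$, which must be examined individually.

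The third stage is the geometric classification inside $(d,k)=(3,1)$, together with all the uniqueness statements. The datum $(1,1,2)$ is discarded by a short count: with $k=1$ and $t\ge 1$ exactly one line is tangent to the conic $Q$, and an ordinary triple point can contain neither that tangent line together with $Q$ (the two branches would be tangent) nor two of the three lines together with $Q$ at two distinct points, so at most one ordinary triple point survives. For each remaining candidate one must (a) produce a free arrangement --- a conic inscribed in a triangle for $(3,3,0)$ and a conic through the three vertices of a triangle for $(0,0,3)$, both furnished by Example~\ref{ex5.5}, while $(2,2,1)$ requires separately exhibiting a configuration with only the allowed singularities and testing whether it is free --- and (b) prove rigidity: $\mathrm{PGL}_{3}(\C)$ acts transitively on triangles, and the torus stabilizing a fixed triangle acts transitively, up to a finite group, on the two-dimensional family of conics inscribed in it and on the two-dimensional family of conics through its vertices, which forces uniqueness up to projective equivalence. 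In every realized case freeness is confirmed by producing the minimal free resolution of the Milnor algebra in the shape required by Theorem~\ref{thmF}, exactly as in Examples~\ref{ex5.3}--\ref{ex5.5}; with the exponents constrained by Lemma~\ref{lemF} this is a finite check, carried out e.g.\ with \texttt{Singular}. Once every admissible triple has been matched to a single projective-equivalence class, the final assertion of the theorem is immediate.

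The delicate part is the $(3,1)$ analysis: passing from the short list of numerical solutions of \eqref{eq:conlinfree} to actual geometry requires a careful enumeration of the ways a smooth conic and three lines can meet using only nodes, tacnodes, and ordinary triple points, and then deciding, for each surviving triple --- in particular $(2,2,1)$ --- whether such a configuration can be realized by a free arrangement. The rigidity statements for the inscribed and circumscribed conics, which rest on the transitivity of the triangle-stabilizing torus on those families of conics, are the other place where genuine work is needed; by contrast, everything for $d\in\{1,2\}$ and for $(3,2)$ reduces to the bookkeeping of the two previous stages plus the explicit models already on record.
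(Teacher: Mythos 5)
Your overall architecture is the same as the paper's: Proposition \ref{prop:bound9} bounds the degree, Propositions \ref{prop11a}--\ref{prop11d} eliminate $d\in\{4,5,6,7\}$, Theorem \ref{thm1} reduces to the four pairs, and the explicit examples supply the realizations. Your numerical analysis of $(d,k)=(3,1)$ is also correct: the exponents are forced to be $(2,2)$ and the candidates are $(n_2,t,n_3)\in\{(3,3,0),(2,2,1),(1,1,2),(0,0,3)\}$, and your elimination of $(1,1,2)$ works. The genuine gap is that you leave $(2,2,1)$ unresolved (``requires separately exhibiting a configuration \ldots and testing whether it is free''); since the theorem asserts that this triple does not occur among free arrangements, this is precisely the step that must be carried out, and it is not a formality.

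In fact that step cannot be completed in the direction the statement requires. Take $Q: x^2+y^2-z^2=0$, the two tangent lines $y\pm z=0$ drawn from the external point $(1:0:0)$, and the secant $y=0$ through that same point, i.e. $f=y(y^2-z^2)(x^2+y^2-z^2)$. This arrangement has $d=3$, $k=1$, an ordinary triple point at $(1:0:0)$, tacnodes at $(0:1:\pm 1)$, and nodes at $(\pm 1:0:1)$, so $(n_2,t,n_3)=(2,2,1)$ and $\tau=12$. The triple $(a,b,c)=(-3x^2-5y^2+5z^2,\,2xy,\,2xz)$ is a syzygy among $f_x,f_y,f_z$, and a short check (using the $y^4$-term of $f_y$ and the sign symmetries in $x$ and $z$) shows there is no syzygy of degree $\leq 1$, so ${\rm mdr}(f)=2$ and $\tau=(m-1)^2-r(m-1-r)=16-4=12$; by the maximal Tjurina number criterion of \cite{duPlessisWall}, \cite{Dimca1}, the curve is free with exponents $(2,2)$. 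Hence the case you defer is realizable by a free arrangement that is not on the list, and the classification as stated needs a fifth item. Note that the paper itself never performs this enumeration for $(d,k)=(3,1)$ --- the proof of Theorem \ref{thm1} only points to Example \ref{ex5.5} --- so your plan is actually more careful than the source on this point; but the step you postpone is exactly where the statement breaks down.
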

Before we formulate the final corollary for this section, we need the following notations inspired by \cite{Mar}.
\begin{definition}
We say that two conic-line arrangements in $\mathbb{P}^{2}_{\mathbb{C}}$ with nodes, tacnodes, and ordinary triple points have the same \textbf{weak combinatorics} if these arrangements have the same list of invariants $(m; n_{2}, t, n_{3})$ with $m$ being degree of the arrangements.
\end{definition}
\begin{conj}[Numerical Terao's Conjecture]
Let $\mathcal{CL}_{1}, \mathcal{CL}_{2} \subset \mathbb{P}^{2}_{\mathbb{C}}$ be two conic-line arrangements with nodes, tacnodes, and ordinary triple points. Assume that $\mathcal{CL}_{1}$ is free and  $\mathcal{CL}_{1}$, $\mathcal{CL}_{2}$ have the same weak combinatorics, then $\mathcal{CL}_{2}$ is also free.
\end{conj}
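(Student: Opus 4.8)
The plan is to derive the statement from the classification Theorem~\ref{thm11} together with a projective rigidity assertion for the arrangements occurring there; this upgrades the assertion of Theorem~\ref{thm11} that a \emph{free} conic-line arrangement in our class is determined up to projective equivalence by $(n_2,n_3,t)$ to the stronger claim that \emph{every} arrangement in our class with the weak combinatorics of a free one is projectively equivalent to that free arrangement. The first ingredient is that the weak combinatorics already pins down $(d,k)$: rewriting the count \eqref{comb:naiv} as in \eqref{E4} gives $k=\binom{m}{2}-(n_2+2t+3n_3)$ and hence $d=m-2k$, so two conic-line arrangements with the same list $(m;n_2,t,n_3)$ have the same number of lines and of conics. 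Now assume $\mathcal{CL}_1$ is free with weak combinatorics $(m;n_2,t,n_3)$. By Proposition~\ref{prop:bound9} one has $m\le 9$; if moreover $k\ge 1$, then $d\le 7$ since $m=d+2k\ge d+2$, Propositions~\ref{prop11a}--\ref{prop11d} exclude $d\in\{4,5,6,7\}$, so $d\le 3$ and Theorem~\ref{thm1} gives $(d,k)\in\{(1,1),(2,1),(3,1),(3,2)\}$; Theorem~\ref{thm11} then forces $(m;n_2,t,n_3)$ to be one of the five explicit types $(3;0,1,0)$, $(4;1,2,0)$, $(5;3,3,0)$, $(5;0,0,3)$, $(7;0,5,3)$. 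Consequently $\mathcal{CL}_2$, sharing the weak combinatorics of $\mathcal{CL}_1$, has the same $d$ and the same $k\ge 1$.

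The heart of the argument is to show that, for each of these five types, \emph{every} conic-line arrangement with only nodes, tacnodes, and ordinary triple points realizing that type is projectively equivalent to the distinguished arrangement of Theorem~\ref{thm11}; applied to $\mathcal{CL}_2$ this yields $\mathcal{CL}_2\cong\mathcal{CL}_1$ projectively, hence $\mathcal{CL}_2$ is free. The point is that the geometric analysis already carried out inside the proof of Theorem~\ref{thm1} uses only the prescribed integers $n_2,t,n_3$ and the list of admissible singularities, never the freeness of the arrangement. Concretely: for $(3;0,1,0)$ the single tacnode forces the line to be tangent to the conic; for $(4;1,2,0)$ the two tacnodes force both lines tangent to the conic, and $n_3=0$ forces their intersection point off the conic; for $(5;3,3,0)$ the three tacnodes force all three lines tangent to the conic, so the conic is inscribed in the triangle they span and the three nodes are the vertices of that triangle; for $(5;0,0,3)$ the three lines span a triangle, the three triple points are its vertices, and these must lie on the conic, so the conic is circumscribed about that triangle; and for $(7;0,5,3)$ one argues as in Theorem~\ref{thm1} that $|C_1\cap C_2|=2$ with both intersection points tacnodes and that each of the three lines is tangent to one conic and secant to the other through two triple points, so that $C_1$ is inscribed in and $C_2$ circumscribed about the triangle spanned by the lines, after which \cite[Proposition~3]{Meg} and the elementary fact that a triangle whose incircle and circumcircle are concentric is equilateral pin the configuration down uniquely. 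In each of the five cases one recovers precisely the free model.

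It remains to dispose of the cases not covered by Theorem~\ref{thm11}. If $d=0$ there is no free $\mathcal{CL}_1$ in our class to work with, since (for $k\ge 1$) this is exactly the case $d=0$ of the proof of Theorem~\ref{thm1}, so the implication is vacuous. If $k=0$ the arrangement is a line arrangement, so $t=0$ and the assertion becomes the numerical Terao conjecture for line arrangements with only nodes and ordinary triple points; by Proposition~\ref{prop:bound9} such a free arrangement has degree at most $9$, so there are only finitely many combinatorial types and the claim reduces to a finite inspection. The step I expect to be the main obstacle is the rigidity claim of the second paragraph, and within it the richest type $(7;0,5,3)$: one must check carefully that the long case distinction in the proof of Theorem~\ref{thm1} can be run using only the fixed numerical data $(n_2,t,n_3)=(0,5,3)$ rather than the freeness hypothesis, and one must additionally carry out the finite but not entirely short classification of free line arrangements of degree $\le 9$ with only nodes and ordinary triple points needed for the case $k=0$.
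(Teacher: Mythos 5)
Your proposal follows essentially the same route as the paper: use \eqref{E4} to recover $(d,k)$ from the weak combinatorics, invoke the classification of Theorem \ref{thm1} and Propositions \ref{prop11a}--\ref{prop11d} culminating in Theorem \ref{thm11}, and then observe that for each of the five resulting numerical types the configuration is rigid up to projective equivalence, so $\mathcal{CL}_2$ is projectively equivalent to the free model; the paper's proof of Corollary \ref{cor5} is exactly this, only stated more tersely. The extra cases you treat ($d=0$ vacuous, $k=0$ reducing to line arrangements with points of multiplicity at most $3$) are handled in the paper in the remark immediately following the corollary, along the same lines you sketch.
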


\begin{cor}
\label{cor5}
Numerical Terao's Conjecture holds for conic-line arrangements with nodes, tacnodes, and ordinary triple points.
\end{cor}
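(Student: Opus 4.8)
The plan is to show that whenever a conic-line arrangement $\mathcal{CL}_1$ with nodes, tacnodes and ordinary triple points is free, \emph{every} arrangement $\mathcal{CL}_2$ with the same weak combinatorics $(m; n_2, t, n_3)$ is also free, by proving that the weak combinatorics forces the \emph{geometry} of the arrangement up to projective equivalence (via Theorem \ref{thm11}), and that projectively equivalent curves are simultaneously free. First I would observe that the case $k=0$ (pure line arrangements) and the cases with $k\geq 1$ can be treated separately. For line arrangements with nodes and triple points only, Numerical Terao is already classical via the combinatorial count and the known classification of free line arrangements of small size, so I would cite that and reduce to the genuinely new case $k\geq 1$, $d\geq 1$. (The degenerate cases $d=0$, $k\geq 1$ need a separate word: a free curve consisting only of smooth conics with nodes and tacnodes must have $d_1 d_2 = (2k-1)^2 - (n_2+3t+4n_3)$, and one checks directly from \eqref{eq:conlinfree} with $d=0$ that no such arrangement is free — indeed \eqref{E7"} already gives $n_2+n_3<0$.)

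The heart of the argument is then Theorem \ref{thm11}: a free conic-line arrangement with $d\geq 1$, $k\geq 1$, and only nodes, tacnodes and ordinary triple points falls into exactly one of four types, and in each type the arrangement is \emph{uniquely determined up to projective equivalence by the triple} $(n_2, t, n_3)$ (equivalently by $m$ together with that triple). So the key step is: given $\mathcal{CL}_1$ free with invariants $(m; n_2, t, n_3)$, Theorem \ref{thm11} pins down $(d,k)$ and the projective-equivalence class of $\mathcal{CL}_1$; now if $\mathcal{CL}_2$ has the same invariants $(m; n_2, t, n_3)$, I must argue that $\mathcal{CL}_2$ \emph{also} realizes one of the four configurations, hence is projectively equivalent to $\mathcal{CL}_1$, hence free (freeness — i.e., the vanishing of $N(f)$, or equivalently the shape of the minimal free resolution of $M(f)$ in Theorem \ref{thmF} — is invariant under projective change of coordinates). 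Finally, for the last implication I would invoke Theorem \ref{thm11} in the other direction: in each of the four cases the listed arrangement \emph{is} free (Examples \ref{ex5.3}, \ref{ex5.4}, \ref{ex5.5}, and the computation for $\mathcal{CL}_7$), so membership in a type already guarantees freeness.

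The main obstacle is the step ``$\mathcal{CL}_2$ with the prescribed invariants must itself be one of the four configurations.'' This is \emph{not} automatic from Theorem \ref{thm11}, which only classifies the \emph{free} ones; a priori there could be a non-free arrangement $\mathcal{CL}_2$ sharing the weak combinatorics of a free $\mathcal{CL}_1$. To close this gap I would show that for each of the four admissible invariant-triples $(m; n_2, t, n_3)$, the combinatorial constraints derived earlier — the count \eqref{comb:naiv}, the freeness-independent inequalities of Theorem \ref{prop2}, together with the incidence analysis of Lemma \ref{lem16} and the graph $\Gamma(\mathcal{CL})$ — force \emph{any} conic-line arrangement with those invariants (free or not) to realize the corresponding geometric configuration. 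Concretely: the values of $(d,k)$ are forced by $m=2k+d$ together with the bounds $d\le 3$ coming from Proposition \ref{prop:bound9} applied in reverse (any arrangement with $d\geq 4$ simply does not occur among these four triples), and then for fixed $(d,k)$ the tacnode/triple-point count combined with Lemma \ref{lem16} forces the tangency pattern, which by \cite[Proposition 3]{Meg} and the equilateral-triangle rigidity argument used for $\mathcal{CL}_7$ determines the arrangement up to $\mathrm{PGL}_3(\mathbb{C})$. Once that rigidity is in hand, simultaneous freeness of $\mathcal{CL}_1$ and $\mathcal{CL}_2$ is immediate, and the corollary follows.
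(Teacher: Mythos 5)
Your proposal follows essentially the same route as the paper's proof: reduce to the classification in Theorem \ref{thm11}, then argue that the weak combinatorics forces \emph{any} arrangement with those invariants (free or not) to be projectively equivalent to one of the four listed models, so that freeness transfers — and you correctly flag that this last step is the real content, which the paper dispatches with ``it is easy to see.'' The one small correction: the pair $(d,k)$ for $\mathcal{CL}_2$ should be extracted from the combinatorial count \eqref{E4}, which gives $k=\binom{m}{2}-(n_2+2t+3n_3)$ for any arrangement, rather than from Proposition \ref{prop:bound9} or the bound $d\le 3$ ``applied in reverse,'' since those are consequences of freeness and so cannot be invoked for the a priori non-free $\mathcal{CL}_2$.
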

\proof
Note that the equation \eqref{E4} implies that the list of invariants $(m; n_{2}, t, n_{3})$ determines the number $k$ of conics  and the number $d=m-2k$ of lines in the arrangements  $\mathcal{CL}_{1}$ and  $\mathcal{CL}_{2}$. 
Then Theorem \ref{thm11} implies that $k=1$ or $k=2$.
In each case, using the fact that $k$ and $d$ are very small, it is easy to see that  up to a projective transformation, the possibilities for $\mathcal{CL}_{2}$ are exactly those listed in Theorem \ref{thm11}, and hence the arrangement $\mathcal{CL}_{2}$ is free as well.
\endproof
\begin{rk}
Numerical Terao's Conjecture can be formulated, in principle, for all reduced singular plane curves. As it was showed in \cite{Mar}, Numerical Terao's Conjecture fails for some (triangular) line arrangements. 
On the other hand, it holds for line arrangements having only points of multiplicity $\leq 3$. Indeed, Proposition \ref{prop:bound9} shows that such a free line arrangement $\mathcal{A}: f=0$ has to satisfy $m= \deg f \leq 9$.
Then Theorem \ref{thmF} implies that either $d_1= {\rm mdr}(f) \leq 3$ or
$d_1= {\rm mdr}(f) = 4$ and $d=9$. Note that if $\mathcal{A}':f'=0$ has the same weak combinatorics as $\A:f=0$, then $\tau(\A)=\tau(\A')$, which implies that
$r'= {\rm mdr}(f') \leq {\rm mdr}(f)$ using the maximality of the Tjurina number for free reduced curves, see \cite{duPlessisWall}.
In the first case, one concludes using the complete classification of line arrangements with ${\rm mdr}(f) \leq 3$, see \cite{BT}. In the second case, we use again the maximality of the Tjurina number of free curves according to \cite{duPlessisWall} and we conclude that $\tau(\A)=\tau(\A')=48$, $n_2=0$ and $n_3=12$. The only line arrangement with these invariants is the line arrangement in Example \ref{exH1} above, which is indeed free with the exponents $(4,4)$.
\end{rk}

\section*{Funding} The first author was partially supported by the Romanian Ministry of Research and Innovation, CNCS - UEFISCDI, Grant \textbf{PN-III-P4-ID-PCE-2020-0029}, within PNCDI III. The second author was partially supported by the National Science Center (Poland) Sonata Grant Nr \textbf{2018/31/D/ST1/00177}. We want to thank an anonymous referee for comments on that paper.
\section*{Data availability} Not applicable as the results presented in this manuscript rely on no external sources of
data or code.

\end{document}